\setlist[enumerate]{leftmargin=.5in}
\setlist[itemize]{leftmargin=.5in}
\theoremstyle{definition}
\newtheorem{definition}{Definition}[section]
\newtheorem{example}{Example}[section]
\theoremstyle{plain}
\newtheorem{theorem}{Theorem}[section]
\newtheorem{proposition}{Proposition}[section]
\newtheorem{lemma}{Lemma}[section]
\newtheorem{corollary}{Corollary}[section]
\theoremstyle{remark}
\newtheorem{remark}{Remark}[section]
\title{Signatures, Lipschitz-free spaces, and paths of persistence diagrams}
\author{Chad Giusti}
\address[Chad Giusti]{Department of Mathematics, Oregon State University, Corvallis, OR 97331}
\email{chad.giusti@oregonstate.edu}
\author{Darrick Lee}
\address[Darrick Lee]{Mathematical Institute, University of Oxford, Andrew Wiles Building, Radcliffe Observatory Quarter, Woodstock Rd, Oxford OX2 6GG}
\email{darrick.lee@maths.ox.ac.uk}
\numberwithin{equation}{section}
\newcommand{\ps}[1]{\mkern-.25mu\mathbin{(\mkern-3.5mu({#1})\mkern-3.5mu)}}
\newcommand{\Z}{\mathbb{Z}}
\newcommand{\N}{\mathbb{N}}
\newcommand{\R}{\mathbb{R}}
\newcommand{\PP}{\mathbb{P}}
\newcommand{\E}{\mathbb{E}}
\newcommand{\X}{\mathbb{X}}
\newcommand{\bx}{\mathbf{x}}
\newcommand{\br}{\mathbf{r}}
\newcommand{\bs}{\mathbf{s}}
\newcommand{\bv}{\mathbf{v}}
\newcommand{\bX}{\mathbf{X}}
\newcommand{\sPM}{\mathsf{PM}}
\newcommand{\cR}{\mathcal{R}}
\newcommand{\cP}{\mathcal{P}}
\newcommand{\cF}{\mathcal{F}}
\newcommand{\cD}{\mathcal{D}}
\newcommand{\cH}{\mathcal{H}}
\newcommand{\cM}{\mathcal{M}}
\newcommand{\cX}{\mathcal{X}}
\newcommand{\cV}{\mathcal{V}}
\newcommand{\cG}{\mathcal{G}}
\newcommand{\fin}{\textrm{fin}}
\newcommand{\Nagent}{N_{\mathrm{agent}}}
\newcommand{\Ntrial}{N_{\mathrm{trial}}}
\newcommand{\Nsimulation}{N_{\mathrm{simulation}}}
\newcommand{\Pers}{\mathrm{Pers}}
\newcommand{\Cpl}{\mathrm{Cpl}}
\newcommand{\Lip}{\mathrm{Lip}}
\newcommand{\Seq}{\mathrm{Seq}}
\newcommand{\VR}{\mathrm{VR}}
\newcommand{\PH}{\mathrm{PH}}
\newcommand{\im}{\mathrm{im}}
\newcommand{\PD}{\mathrm{PD}}
\newcommand{\wOmega}{\widetilde{\Omega}}
\newcommand{\wid}{\widetilde{d}}
\newcommand{\wP}{\widetilde{P}}
\newcommand{\bB}{\mathbf{B}}
\newcommand{\generalspace}{Z}
\newcommand{\qgeneralspace}{\widetilde{\generalspace}}
\begin{document}
\begin{abstract}
    Paths of persistence diagrams provide a summary of the dynamic topological structure of a one-parameter family of metric spaces. These summaries can be used to study and characterize the dynamic shape of data such as swarming behavior in multi-agent systems, time-varying fMRI scans from neuroscience, and time-dependent scalar fields in hydrodynamics. While persistence diagrams can provide a powerful topological summary of data, the standard space of persistence diagrams lacks the sufficient algebraic and analytic structure required for many theoretical and computational analyses. We enrich the space of persistence diagrams by isometrically embedding it into a \emph{Lipschitz-free space}, a Banach space built from a universal construction. We utilize the Banach space structure to define bounded variation paths of persistence diagrams, which can be studied using the \emph{path signature}, a reparametrization-invariant characterization of paths valued in a Banach space. The signature is \emph{universal} and \emph{characteristic}, which allows us to theoretically characterize measures on the space of paths and motivates its use in the context of kernel methods. However, kernel methods often require a feature map into a Hilbert space, so we introduce the \emph{moment map}, a stable and injective feature map for static persistence diagrams, and compose it with the discrete path signature, producing a computable feature map into a Hilbert space. Finally, we demonstrate the efficacy of our methods by applying this to a parameter estimation problem for a 3D model of swarming behavior. 
\end{abstract}

\maketitle

\section{Introduction}

In complex systems, collective behavior emerges from the interactions between individual elements~\cite{dorsogna_self-propelled_2006, nguyen_thermal_2012}, but the state of an individual element often contains very little information. It is therefore reasonable, and even desirable, to describe the state of the system by using abstract or qualitative organizational features. The negligible contribution of individual agents suggests that it should be possible to infer such features from a relatively small representative subsample of the constituent agents, and the development of tools for effectively doing so is an area of active research. 

\begin{figure}
    \centering
            \includegraphics[width=0.98\textwidth]{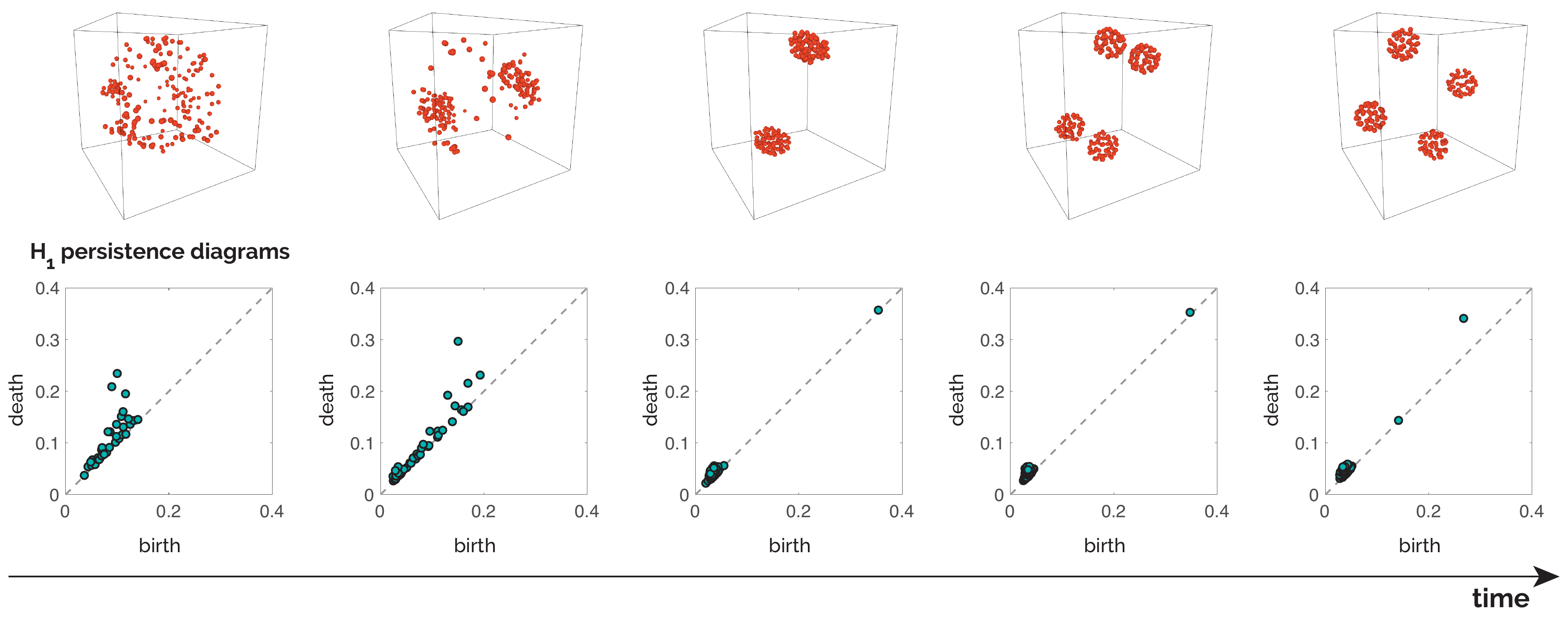}
                \caption{Snapshots from a time-varying point cloud in $\R^3$ (top row) generate corresponding frames along a path of $H_1$ persistence diagrams (bottom row).}
                \label{fig:path_of_pd}
    \end{figure}

One tool for characterizing such organizational structures is  persistent homology~\cite{chazal_structure_2016, edelsbrunner_persistent_2008,ghrist_barcodes:_2008-3, otter_roadmap_2017, zomorodian_computing_2005}, a common tool from topological data analysis (TDA). It is applied by constructing a nested family of spaces, which model the structure of the underlying system at scale parameters $\epsilon$, and summarizes the topological features of this family in an object called a \emph{persistence diagram}. Persistence diagrams provide a multi-scale summary of the system's organization, while discarding precise information about individual elements in a system. Thus, they are well-suited to characterizing the state of a multi-agent complex system.

While the details of individual agents can often be abstracted away when describing a static snapshot of a complex system, the same cannot be said for the temporal structure when characterizing dynamics. Even if the ensemble of states the system attains is similar, the ordering of these states must be carefully recorded to provide understanding. For example, a system in which agents move into consensus versus one in which they move out of consensus are operating under very different dynamic principles, even though reordering the time dimension so one runs backward will result in a similar collection of coarse system organizations, and thus similar topological signatures. \emph{Paths of persistence diagrams}\footnote{Here, we specifically mean paths in the space of persistence diagrams, as compared to the concept of a \emph{persistence vineyard}~\cite{cohen-steiner_vines_2006}, which retains information about how individual homology classes evolve over time. Such granular information is often difficult to obtain in the context of real data, particularly when individual elements of the system cannot be tracked.}~\cite{cohen-steiner_vines_2006, munch_applications_2013, perea_sliding_2015, rieck_uncovering_2020, topaz_topological_2015}, illustrated in~\Cref{fig:path_of_pd}, provide a hybrid form of measurement that retains this temporal\footnote{The methods we will develop work without change for any one-dimensional parameterized family of persistence diagrams, however this terminology collides with that of the scale parameter so we will favor the dynamic terminology throughout.} information while discarding the relatively uninformative data about the individual agents. 

However, as mathematical objects, paths of persistence diagrams are not straightforward to understand and measure.
Neither spaces of persistence diagrams nor spaces of paths are easy to vectorize, nor to apply statistical methods to. To address this problem, in this paper we to develop a collection of informative, stable, and computable features for the space of paths of persistence diagrams, and demonstrate one application of those features in classifying the dynamic behavior of a complex system.

The study of path spaces is a classical topic in algebraic topology~\cite{chen_iterated_1977-1}. Recent work has leveraged and extended these ideas to provide sophisticated feature sets in the form of \emph{path signatures} for time series analysis in machine learning~\cite{chevyrev_primer_2016, kiraly_kernels_2019, adams_persistence_2017, morrill_generalised_2021, lee_path_2020, kidger_deep_2019}. The path signature is a reparametrization-invariant characterization of paths valued in a Banach space, and has been shown to be \emph{universal} and \emph{characteristic}~\cite{chevyrev_signature_2022}, two properties which provide strong theoretical guarantees in the context of kernel methods.

\begin{figure}[t]
    \centering
            \includegraphics[width=0.98\textwidth]{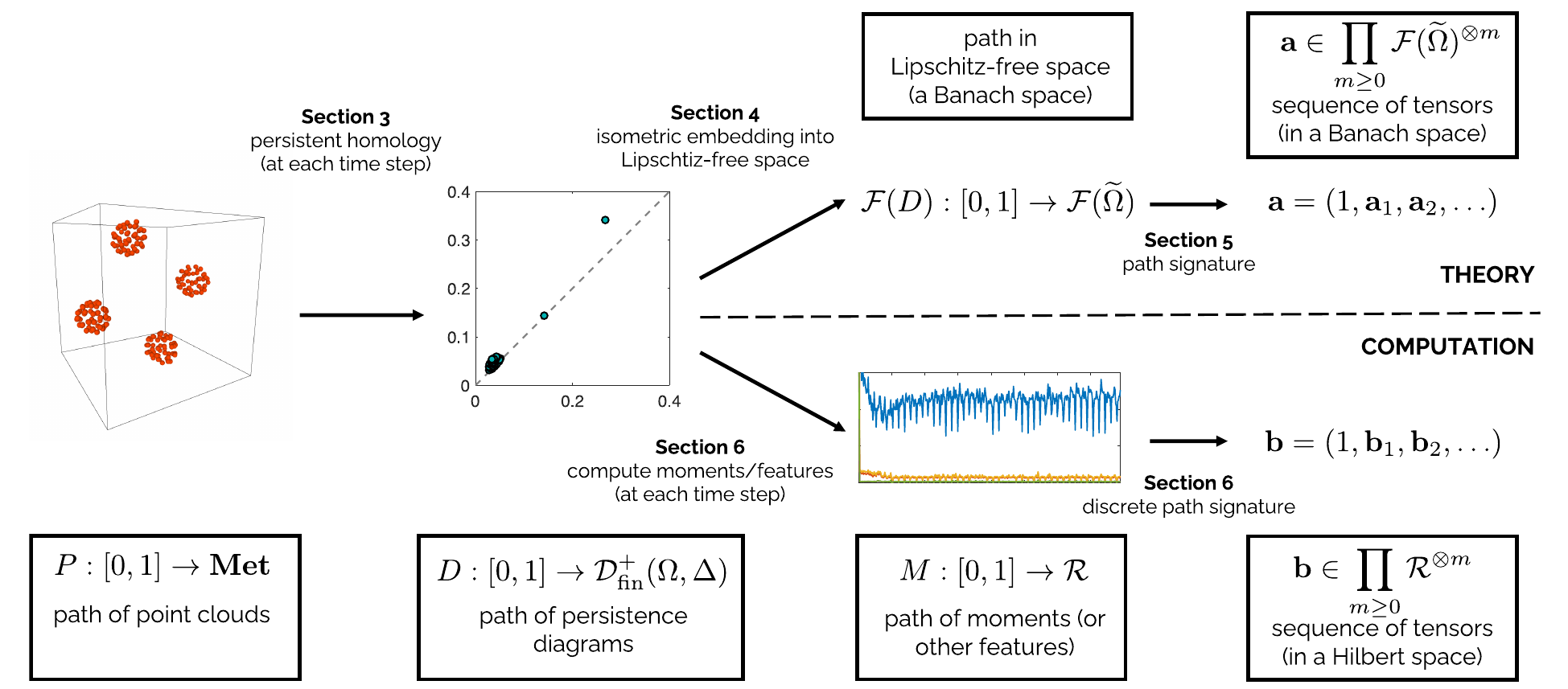}
                \caption{Computational pipeline for studying paths of point cloud, including outline of article.}
                \label{fig:outline}
    \end{figure}

\subsection{Contributions and Outline}

Throughout this article, we will use the problem of quantifying the temporal topological structure of dynamic multi-agent systems as our running example. The structure of this article follows the computational pipeline for the analysis of such data, and is shown in~\Cref{fig:outline}. We begin in~\Cref{sec:feature} with an overview of feature maps and their universal and characteristic properties, forming the foundation of our approach. We then begin the methods in our pipeline, and provide an overview of persistent homology, along with metric properties of the space of persistence diagrams.

The remainder of the article is split into two major parts. The \emph{theoretical} part intrinsically defines path signatures on the space of persistence diagrams. This is done by isometrically embedding the space of persistence diagrams into a Banach space in~\Cref{sec:pd_banach}, relating persistence diagrams with the theory of Lipschtiz-free space~\cite{weaver_lipschitz_2018}. This is followed by defining the signature on this Banach space in~\Cref{sec:pd_paths}.

However, we do not have explicit, finite-dimensional representations of the elements of the Lipschitz-free space, and thus we must first vectorize the static persistence diagrams. In~\Cref{sec:computation}, we introduce \emph{persistence moments}, a novel stable and injective feature map for bounded persistence diagrams, which allows us to summarize persistence diagrams in a low-dimensional representation. We then consider the discrete path signature, which can be efficiently computed in practice, and finally apply this pipeline to a parameter estimation problem for multi-agent systems. 
In particular, our main contributions are as follows:

\begin{itemize}
    \item We connect the theory of persistence diagrams with the theory of Lipschitz-free spaces~\cite{weaver_lipschitz_2018} in order to obtain an isometric embedding of the space of persistence diagrams into a Banach space. 

    \item We introduce the space of bounded variation paths of persistence diagrams and show that the path signature provides a universal and characteristic feature map for this space. In particular, the expected signature characterizes measures on this path space, and can thus be used for statistical applications. In addition, this characterization provides a foundation for the investigation of stochastic dynamics using persistent homology via analogy to the theory of rough paths.
    
    \item We introduce \emph{persistence moments}, a new feature map for persistence diagrams motivated by the perspective of persistence diagrams as measures. Specifically, by computing the moments of bounded persistence diagrams, we obtain an injective stable feature map for the space of persistence diagrams.
    
    \item Combining the moment map with the discrete path signature, we obtain a computable feature map for paths of persistence diagrams. Furthermore, this feature map is stable with respect to the $1$-variation metric on paths, and is also universal and characteristic. 
    
    \item We develop code to perform signature kernel computations for paths of persistence diagrams, which can be found at:
    \begin{center}
        \url{https://github.com/ldarrick/paths-of-persistence-diagrams}.
    \end{center}

\end{itemize}

\begin{remark}
    The results in this work originally appeared in the second author's PhD dissertation. While the authors were preparing this manuscript, the article~\cite{bubenik_virtual_2020} was updated to include an independent discovery of the isometric embedding of the space of persistence diagrams into a Lipschitz-free space. While there is some overlap between the papers, the perspectives are fundamentally different. The results of~\cite{bubenik_virtual_2020} discuss the Lipschitz-free construction via category theory, while our article focuses on the connections between Lipschitz-free spaces and measures, along with applications to path signatures of persistence diagrams.
\end{remark}

\subsection{Previous and Related Work}
The question of how to study dynamic data using persistent homology reaches back to the early development of the subject. Perhaps the earliest initial approach is through persistence vineyards, introduced by Cohen-Steiner et al.~\cite{cohen-steiner_vines_2006}.  While persistence vineyards capture substantial information about the evolution of topological summaries, they require a more complex representation and poses several computational and theoretical challenges. Later, Munch et al.~\cite{munch_probabilistic_2015} studied the paths of persistence diagrams through the lens of statistical and probabilistic features, such as Fr\'echet means. An alternative approach to the study of dynamic metric spaces was considered by Kim and M\'emoli~\cite{kim_spatiotemporal_2020} by building a 3-parameter filtration, and using methods from multiparameter persistence. \medskip

Because their features are often understood qualitatively, complex time-varying systems are a popular potential application of persistent homology. One application of broad interest is the dynamic topological structure of swarming/flocking behavior, which was studied by Topaz et al.~\cite{topaz_topological_2015} and Bhaskar et al.~\cite{bhaskar_analyzing_2019}. For purposes of interpretability and comparison with existing methods, we use simulations of swarm data in our computational experiments. Other examples of time-varying persistence diagrams arise, for example, in neuroscience, where they have been used to study time-varying fMRI ~\cite{rieck_uncovering_2020} and as EEG data~\cite{yoo_topological_2016}. In another direction, persistent homology has been used to detect periodicity in time series through sliding window embeddings~\cite{perea_sliding_2015, perea_sw1pers_2015}. This method has been used to detect chatter in physical cutting processes such as turning and milling~\cite{khasawneh_chatter_2016} and biphonation in videos of vibrating vocal folds~\cite{tralie_quasiperiodicity_2018}.

Path signatures were originally developed as the \emph{iterated integral model for path spaces} by Chen~\cite{chen_integration_1958} to characterize paths on manifolds, which was subsequently generalized to a de Rham-type cochain algebra for loop spaces~\cite{chen_iterated_1977-1}. Path signatures were later used as the foundation for the theory of rough paths by Lyons~\cite{lyons_differential_1998}, and has been transformed into a powerful feature set for time series data~\cite{chevyrev_primer_2016}. Chevyrev and Oberhauser~\cite{chevyrev_signature_2022} showed the signature is universal and characteristic, providing a theoretical justification for its use in the context of kernel methods, facilitated by Kiraly and Oberhauser's~\cite{kiraly_kernels_2019} development of efficient algorithms to compute truncated signature kernels. A more recent method to compute untruncated signature kernels using PDEs was developed by Cass et al.~\cite{cass_computing_2021}. The connection between path signatures and persistent homology was initiated by Chevyrev, Nanda and Oberhauser~\cite{chevyrev_persistence_2020} to develop feature maps for static persistence diagrams. \medskip

Interest in applying statistical and machine learning tools in the context of topological data analysis has also driven recent work on generalized spaces of persistence diagrams. Bubenik and Elchesen~\cite{bubenik_universality_2020, bubenik_virtual_2020} study the space of persistence diagrams from an algebraic and category theoretic perspective, and develop spaces of persistence diagrams with additional structure through the use of universal properties. The connection between persistence diagrams and the theory of optimal transport was made explicit by Divol and Lacombe~\cite{divol_understanding_2021}.

Recent work shows the space of finite persistence diagrams does not admit an isometry into a Hilbert space~\cite{turner_same_2020} for any $p$-Wasserstein metric with $p \in [1, \infty]$. When $p \in (2,\infty]$, there does not even exist a \emph{coarse embedding} into a Hilbert space~\cite{bubenik_embeddings_2019, wagner_nonembeddability_2021}. Finally, the limitations of bi-Lipschitz embeddings into a Hilbert space are discussed in~\cite{carriere_metric_2019}. Given these limitations, an isometric embedding of the space of persistence diagrams into a Banach space is among the strongest positive results one could reasonably hope for. \medskip

\section{Feature Maps}
\label{sec:feature}

In this paper, we study two classes of data, persistence diagrams and paths, which are natively elements of nonlinear spaces. This creates computational difficulties in applying methods from machine learning to their study, as we cannot directly represent elements of these spaces using fixed finite-dimensional vectors: the numbers of points in a persistence diagram may vary, and even discretized time series may have different lengths. To address this problem, a common approach is to construct \emph{feature maps}. Feature maps which are universal and characteristic (see \Cref{def:univ_char}) provide theoretical guarantees for problems related to learning both functions and measures on the input space.\medskip

The feature maps discussed in the theoretical part of this paper are not valued in Hilbert spaces; rather they will be valued in a non-reflexive Banach space. However, the concept of a universal and characteristic feature map still hold in this setting, and we will show that our feature map for paths of persistence diagrams is universal and characteristic in~\Cref{sec:pd_paths}. In~\Cref{sec:computation}, we provide an approximate feature map which is valued in a Hilbert space, and show that it is also universal and characteristic. This allows us to exploit the computational properties of kernel methods, and we apply this in numerical experiments in~\Cref{sec:applications}.

\subsection{Universal and Characteristic Feature Maps}

Given a topological space $\cX$ whose elements are of the  desired input data type, a feature map is a continuous function
\begin{equation}
\label{eq:featuremap}
    \Phi: \cX \rightarrow V
\end{equation}
into a topological vector space $V$. When $V $ is a Hilbert space, we define the \emph{kernel function}
\begin{align}
\label{eq:featuremap_kernel}
    \kappa: \cX \times \cX \rightarrow \R, \quad \kappa(x, x') \coloneqq \langle \Phi(x), \Phi(x') \rangle_V.
\end{align}
The idea behind kernel methods in machine learning is to translate nonlinear learning problems in the input space $\cX$ into linear learning problems in the feature space $V$. By using kernelized algorithms which only depend on the kernel in \Cref{eq:featuremap_kernel}, we bypass the issue of computing potentially infinite dimensional representations of the input data. These methods allow us to study two broad classes or problems in machine learning:

\begin{enumerate}
    \item We can use linear functionals $\langle \ell, \Phi(\cdot) \rangle_V$ to \emph{approximate functions} on $\cX$.

    \item Let $\cM_{\fin}^+(\cX)$ denote the space of finite Radon measures on $\cX$. We can use the \emph{kernel mean embedding} (KME) $ \overline{\Phi}: \cM^+_{\fin}(\cX) \rightarrow V$ of $\mu \in \cM_{\fin}^+(\cX)$ defined by $\overline{\Phi}(\mu) = \int_{\cX} \Phi(x) d\mu(x)$ to \emph{represent measures} as elements of $V$. 
\end{enumerate}

\medskip

\begin{remark}
While the requirement that $V$ is a Hilbert space may not be satisfied in certain situations, recent work has generalized the Hilbert space kernel methods to the setting of Banach spaces~\cite{zhang_reproducing_2009, fukumizu_learning_2011, schlegel_approximate_2020}. Although we do not pursue this approach in the present paper, this suggests a potential avenue for future work.
\end{remark}

In order to effectively study functions and measures, the feature map must satisfy additional properties. Here, $V$ is a topological vector space, $V'$ is the topological dual of continuous linear functionals, and $V^*$ is the algebraic dual of all linear functionals. These duals are equivalent when $V$ is finite dimensional. The following discussion is adapted from~\cite{chevyrev_signature_2022}.

Fix a function class $\cG \subset \R^\cX$. We will require a generalization of the KME defined on the dual $\cG'$, which may include distributions. Rather than defining the KME using an integral, the KME will send a distribution $D \in \cG'$ to a linear functional on $V'$ as
\begin{align}
\label{eq:KME_distributions}
    \overline{\Phi}: \cG' \rightarrow (V')^*, \quad D  \mapsto \big(\ell \mapsto D(\langle \ell, \Phi(\cdot)\rangle_V)\big),
\end{align}
where $\ell \in V'$. With this generalized KME, we can define universal and characteristic maps.

\begin{definition}
\label{def:univ_char}
 Let $\cX$ be a topological space, and $ \cG \subset \R^\cX$. Consider a feature map
 \begin{align*}
     \Phi : \cX \rightarrow V.
 \end{align*}
 Suppose that $\langle \ell, \Phi(\cdot ) \rangle \in \cG$ for all $\ell \in V'$. We say that $\Phi$ is
 \begin{enumerate}
     \item \emph{universal to $\cG$} if the map $\iota : V \rightarrow \cG$ with $\iota(\ell) = \langle \ell, \Phi(\cdot) \rangle$ has a dense image in $\cG$; and

     \item \emph{characteristic to $\cG'$} if the KME map from~\Cref{eq:KME_distributions} is injective.
 \end{enumerate}
\end{definition}

We have the following equivalence between universal and characteristic maps.
\begin{theorem}[\cite{chevyrev_signature_2022}]
\label{thm:duality}
    Suppose that $\cG$ is a locally convex topological vector space. A feature map $\Phi$ is universal to $\cG$ if and only if $\Phi$ is characteristic to $\cG'$. 
\end{theorem}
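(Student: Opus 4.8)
The plan is to recognize that the two conditions are dual to one another via the Hahn–Banach theorem, so the proof reduces to a standard annihilator argument once the right algebraic identity is in place. The crucial observation is that the KME map $\overline{\Phi}$ is nothing but the transpose of the universality map $\iota$. Indeed, for $D \in \cG'$ and $\ell \in V'$ we have
\[
    \overline{\Phi}(D)(\ell) = D\big(\langle \ell, \Phi(\cdot)\rangle_V\big) = D\big(\iota(\ell)\big),
\]
so that $\overline{\Phi}(D) = D \circ \iota$. Thus $\overline{\Phi} : \cG' \to (V')^*$ is exactly the pullback (transpose) of the linear map $\iota : V' \to \cG$, $\ell \mapsto \langle \ell, \Phi(\cdot)\rangle$. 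Note that no continuity of $\iota$ is needed here, since we only ever pre-compose it with elements of $\cG'$, which are continuous by definition.

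Next I would translate injectivity of $\overline{\Phi}$ into a statement about the image of $\iota$. Since $\overline{\Phi}$ is linear, it is injective if and only if its kernel is trivial. By the identity above, $D \in \ker \overline{\Phi}$ precisely when $D \circ \iota = 0$, i.e.\ when the continuous functional $D$ vanishes on the linear subspace $W := \iota(V') \subseteq \cG$ (which is genuinely a subspace because $\iota$ is linear). Hence $\Phi$ is characteristic to $\cG'$ if and only if the only element of $\cG'$ annihilating $W$ is the zero functional, that is, $W^{\perp} = \{0\}$ in $\cG'$.

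Finally I would invoke the consequence of Hahn–Banach that is valid precisely because $\cG$ is assumed locally convex: a linear subspace $W$ of a locally convex topological vector space is dense if and only if its annihilator $W^{\perp} \subseteq \cG'$ is trivial. Applying this to $W = \iota(V')$, density of the image of $\iota$ — which is exactly universality of $\Phi$ to $\cG$ — is equivalent to $W^{\perp} = \{0\}$, which by the previous paragraph is equivalent to injectivity of $\overline{\Phi}$, i.e.\ to $\Phi$ being characteristic to $\cG'$. This closes the equivalence. The only genuinely nontrivial ingredient is the Hahn–Banach annihilator criterion, and this is also the single place where the local convexity hypothesis is essential: it is what guarantees enough continuous functionals on $\cG$ to separate points from closed subspaces. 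Everything else is bookkeeping around the transpose identity, so I expect the main (and only real) obstacle to be stating the Hahn–Banach consequence in the precise form needed and verifying that $\iota(V')$ is a subspace to which it applies.
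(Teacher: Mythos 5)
Your proof is correct: the identification $\overline{\Phi}(D) = D \circ \iota$, which exhibits the KME as the transpose of $\iota$, together with the Hahn--Banach annihilator criterion in locally convex spaces (a subspace is dense iff its annihilator in the dual is trivial) is precisely the standard argument for this duality, with local convexity entering exactly where you say it does. The paper itself gives no proof, importing the statement from~\cite{chevyrev_signature_2018}, and your argument coincides with the one used there, so the proposal matches the intended proof (note in passing that you silently corrected the typo in the paper's definition of universality, where the domain of $\iota$ is written as $V$ rather than $V'$).
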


\begin{remark}
This equivalence may seem to stand in contrast with previous work such as in~\cite{kwitt_statistical_2015}, which states that a characteristic feature map is not necessarily universal. This is due to variations in the definition of characteristicness; for example in~\cite{kwitt_statistical_2015}, a feature map is called characteristic if its corresponding KME is injective on probability measures. An extensive discussion on the various notions of universal and characteristic maps is provided in~\cite{simon-gabriel_kernel_2018}.
\end{remark}

\section{Persistent Homology and Persistence Diagrams}
\label{sec:pd_intro}

\begin{figure}
    \centering
            \includegraphics[width=0.98\textwidth]{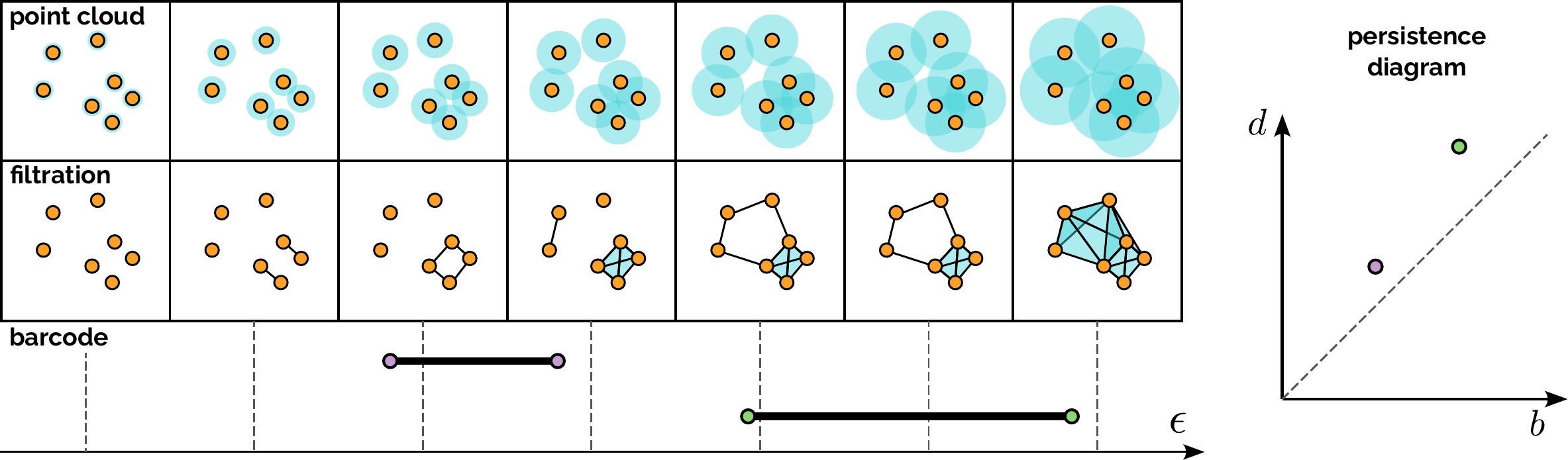}
        \label{fig:persistent_homology}
            \caption{Summary of persistent homology in dimension $1$. The ``point cloud'' row shows a point cloud along with $\epsilon$-balls of increasing radius. The ``filtration'' row shows the corresponding sequence of Vietoris-Rips complexes. The ``barcode'' row shows the birth and death of $1$-dimensional homology classes as a set of bars. Finally, the corresponding persistence diagram is shown on the right.}
\end{figure}

Persistent homology is a tool to capture the multi-scale topological structure of data sets such as point clouds. Intrinsically, point clouds have trivial topological structure as it is simply a set of disjoint points. However, nontrivial topological features may arise if we view the point cloud at different scales. Given a \emph{scale parameter} $\epsilon >0$, we build a topological space from the point cloud by filling in the convex hulls of each collection of points with pairwise distance smaller than $\epsilon$. This provides a topological space which represents the underlying point cloud at the scale parameter $\epsilon$.

Now the natural question arises: how do we choose which $\epsilon$ parameter to use? The key idea behind persistent homology is to consider \emph{all} scale parameters $\epsilon > 0$, and track how the topological features change as the scale parameter is varied. In particular, a \emph{persistence diagram} is an object which summarizes the births and deaths of all topological features of a fixed dimension, which can then be used as a topological summary of a point cloud.

In this section, we provide further details on persistent homology, though we note that there are now many excellent introductions to the subject~\cite{dey_computational_2022-1, chazal_structure_2016, otter_roadmap_2017}. Throughout this article, we will restrict ourselves to the case of point clouds, though we emphasize that persistent homology is a much more general tool which can be applied in many other contexts. Next, we will give an exposition of the main properties of the space of persistence diagrams, as this will be the focus of the following section. 

\subsection{Persistent Homology}
Suppose $\bX = \{\bx_i\}_{i=1}^N$ is a point cloud, where each $\bx_i \in \R^d$. The method described in this section is summarized in~\Cref{fig:persistent_homology}.\medskip

\paragraph{Building the Vietoris-Rips Filtration}
The first step in persistent homology is to build a \emph{filtration}: a sequence of topological spaces which model the point cloud $\bX$ at every scale parameter $\epsilon >0$. There are several different ways to construct these spaces, but we will focus on the most commonly used filtration (which is implemented in most persistent homology packages) called the \emph{Vietoris-Rips filtration}. This consists of a topological space $\VR_\epsilon(\bX)$ for each $\epsilon > 0$, along with inclusion maps
\[
    \VR_\epsilon(\bX) \hookrightarrow \VR_{\epsilon'}(\bX)
\]  
for any $\epsilon < \epsilon'$. 

Recall that a \emph{simplicial complex} can be represented as a subset $S \subset 2^{\bX}$ of the power set of points in $\bX$, which is closed under taking subsets: if $\sigma \in S$ and $\tau \subset \sigma$, then $\tau \in S$. Each subset $\sigma \in S$ corresponds to a \emph{simplex}:
\begin{itemize}
    \item a subset $\sigma = \{\bx_0\}$ is a \emph{0-simplex} representing the point $\bx_0$;
    \item a subset $\sigma = \{\bx_0, \bx_1\}$ is a \emph{1-simplex} representing the edge which connects $\bx_0$ with $\bx_1$;
    \item a subset $\sigma = \{\bx_0, \bx_1, \bx_2\}$ is a \emph{2-simplex} representing the triangle with vertices given in $\sigma$; and
    \item a subset $\sigma = \{\bx_0, \ldots, \bx_n\}$ is an \emph{n-simplex} representing the n-dimensional simplex with vertices given in $\sigma$. 
\end{itemize}

The \emph{Vietoris Rips complex} at parameter $\epsilon > 0$ is defined to be 
\[
    \VR_\epsilon(\bX) \coloneqq \big\{ \sigma = \{\bx_{i_0}, \ldots, \bx_{i_n}\} \, : \, n < N, \, i_0 < \ldots < i_n, \, d(\bx_{i_j}, \bx_{i_k}) < \epsilon\big\}. 
\]
By definition, $\VR_\epsilon(\bX) \subset \VR_{\epsilon'}(\bX)$ if $\epsilon < \epsilon'$, so there exist inclusion maps $\VR_\epsilon(\bX) \hookrightarrow \VR_{\epsilon'}(\bX)$. \medskip

\paragraph{Persistence Modules}
Now that we are equipped with a filtration of topological spaces, we can compute the simplicial homology of these spaces for a fixed dimension $k$,
\[
    \PH_{k,\epsilon}(\bX) \coloneqq H_k(\VR_\epsilon(\bX), \Z_2).
\]
We use homology with $\Z_2$ field coefficients, since this is the most efficient for computation, and thus the homology groups are vector spaces.  For $\epsilon < \epsilon'$, we also obtain linear maps
\[
    \rho_{\epsilon, \epsilon'}: \PH_{k,\epsilon}(\bX) \rightarrow \PH_{k,\epsilon'}(\bX),
\]
due to the existence of inclusion maps between the corresponding Vietoris-Rips complexes and the functoriality of homology. \medskip

\paragraph{Persistence Diagrams}
Through the linear maps in the previous section, the persistence module contains information about when a homology class is ``born'' (the first $\epsilon$ at which the homology class appears), and when the same homology class ``dies'' (the last $\epsilon$ at which the same homology class is nontrivial). In fact, due to the \emph{decomposition theorem}~\cite{chazal_structure_2016}, this is precisely all of the information contained in the persistence module. A \emph{persistence diagram} collects the birth and death parameters of all homology generators across all scales.

More formally, we say that a homology class $\sigma \in \PH_\epsilon(\bX)$ is \emph{born} at parameter $b_\sigma>0$ if $\sigma \in \im(\rho_{b_\sigma, \epsilon})$ but $\sigma \notin \im(\rho_{b-\delta, \epsilon})$ for any $\delta >0$. Similarly, the homology class $\sigma$ \emph{dies} at parameter $d_\sigma>0$ if $\rho_{\epsilon, d_\sigma}(\sigma) \neq 0$, but $\rho_{\epsilon, d_\sigma+\delta}(\sigma) = 0$ for all $\delta >0$. 

The \emph{persistence diagram} with respect to $\bX$ is the multi-set (repetitions of elements can exist) of birth-death pairs
\[
    \PD_k(\bX) = \{ (b_\sigma, d_\sigma) \, : \, \sigma \text{ is a $k$-dimensional persistent homology generator}\}.
\]
The general idea is that $\sigma$ ranges over all ``independent'' homology classes over the scale parameter, where classes at different $\epsilon$ can be identified via the $\rho$ map. For a more rigorous definition and further details, see~\cite{dey_computational_2022-1}. Furthermore, note that this procedure produces a \emph{finite} persistence diagram (the multiset contains finitely many elements), since we started with a finite point cloud.

\subsection{The Metric Space of Persistence Diagrams and Measures} \label{ssec:metric_pers}
The resulting persistence diagram $\PD_k(\bX)$ represents a summary of the $k$-dimensional topological features in $\bX$, which we aim to use for problems in data science such as classification or regression. However, we require some additional structure on the space of persistence diagrams for this to be a useful tool in data science. Here, we discuss two such properties.

\begin{enumerate}
    \item The \emph{partial $p$-Wasserstein metrics} on the space of diagrams.
    \item The generalization of diagrams to \emph{persistence measures}, which provides a notion of scaling, and allows us to perform normalization and compute statistics such as mean. 
\end{enumerate}\medskip

\paragraph{The Space of Finite Persistence Diagrams}
We begin by formally defining the space of persistence diagrams, following the perspective introduced in~\cite{bubenik_universality_2020,bubenik_virtual_2020}, which can be easily generalized to other settings. A persistence diagram is a multiset of points in $\R^2$ supported in the half-plane above the diagonal. More formally, let
\begin{align}\label{eq:pers_omega_delta}
    \Omega \coloneqq \{(b,d) \in \R^2 \, : \, b \leq d\}, \quad \Delta \coloneqq \{(x,x) \in \R^2\} \subset \Omega.
\end{align}
Given a set $Z$, we let $D^+(Z)$ denote the free commutative monoid on $Z$, consisting of formal sums $\sum_{i=1}^n n_i z_i$, where $n_i \in \N$ and $z_i \in Z$. The \emph{space of finite persistence diagrams} is
\[
    \cD^+_{\fin}(\Omega, \Delta) \coloneqq D^+(\Omega)/D^+(\Delta) \cong D^+(\Omega - \Delta),
\]
which we simplify to $\cD^+_{\fin}$ in this section.\medskip

\paragraph{Partial Wasserstein Metrics on Diagrams}
The most commonly used metrics for persistence diagrams are the partial Wasserstein metrics\footnote{While this is often called the Wasserstein metric in the topological data analysis literature, the diagram metrics do not correspond to the usual notions of Wasserstein metrics in optimal transport theory. Instead, it corresponds to the partial Wasserstein metrics~\cite{figalli_new_2010}, as discussed in~\cite{divol_understanding_2021}.}.
The standard definition is given by partial matchings. Let $D = \{z_i\}_{i=1}^{n}, D' = \{z'_i\}_{i=1}^{n'} \in \cD^+_{\fin}$ be two persistence diagrams, and $d$ be the Euclidean metric on $\Omega \subset \R^2$. A \emph{partial matching} between $D$ and $D'$ is a partially defined injective map $\Gamma: D \dashrightarrow D'$. Let $M_\Gamma \subset D$ denote the subset of $D$ on which $\Gamma$ is defined and $U_{\Gamma} \subset D$ and $U^{\Gamma} \subset D'$ be the unmatched points in $D$ and $D'$ respectively. For $p \in [1,\infty)$, the \emph{partial $p$-Wasserstein metric} $W^\partial_p[d]: \cD^+_\fin \times \cD^+_\fin \to \R$ is defined to be
\[
    W^\partial_p[d](D, D') \coloneqq \inf_{\Gamma} \left( \sum_{z \in M_{\Gamma}} d(z, \Gamma(z))^p + \sum_{z \in U_{\Gamma}} d(z, \Delta)^p + \sum_{z' \in U^{\Gamma}} d(z', \Delta)^p\right)^{1/p},
\]
where the infimum is taken over all partial matchings and $d(z, \Delta)$ denotes the minimum distance between $z$ and the subset $\Delta \subset \Omega$. 

However, we can equivalently define the Wasserstein metric in terms of couplings, which will lead to the generalization to measures. A \emph{coupling} between $D$ and $D'$ is an element $\sigma \in D^+(\Omega \times \Omega)$ such that
\[
    (\pi_1)_* \sigma = D \pmod{D^+(\Delta)} \quad \text{and} \quad (\pi_2)_*\sigma = D' \pmod{D^+(\Delta)},
\]
where $\pi_i : \Omega \times \Omega \to \Omega$ denotes the projection maps to the first and second coordinate.
A coupling is a multiset with the form $\sigma = \{(\sigma_i, \sigma'_i)\}_{i=1}^n$, where $\sigma_i, \sigma_i' \in \Omega$.
With this notion, we can equivalently define the partial $p$-Wasserstein distance by
\[
    W^\partial_p[d](D, D') = \inf_{\sigma} \left(\sum_{i=1}^n d(\sigma_i, \sigma_i')^p\right)^{1/p},
\]
where the infimum is taken over all couplings between $D$ and $D'$.
A matching $\Gamma: D \dashrightarrow D'$ corresponds to a coupling
\[
    \sigma_\Gamma \coloneqq \{(z, \Gamma(z)) \, : \, z \in M_{\Gamma}\} \cup \{(z, \pi_\Delta(z)) \, : \, z \in U_{\Gamma}\} \cup \{(\pi_{\Delta}(z'), z') \, : \, z' \in U^\Gamma\},
\]
where $\pi_\Delta: \Omega \to \Delta \subset \Omega$ is the orthogonal projection onto the diagonal. \medskip

\paragraph{The Space of Persistence Measures}
Often, it is helpful to consider a larger space of objects, which includes the space of persistence diagrams, but also contains other objects of interest (such as the average persistence diagram over some collection), which are not contained in $\cD^+_\fin$. In particular, we introduce a generalization of persistence diagrams where the points on the diagram are weighted, to allow for a notion of scaling. A natural way to achieve this is to view a persistence diagram as a measure.

We can introduce this notion of persistence measures in the same way as for diagrams. Given a topological space $Z$, we let $M^+(Z)$ denote the space of finite (positive) Radon measures on $M^+(Z)$. The \emph{space of finite persistence measures} is defined to be
\[
    \cM^+_{\fin}(\Omega, \Delta) \coloneqq M^+(\Omega)/ M^+(\Delta) \cong M^+(\Omega - \Delta). 
\]
Once again, we often use $\cM^+_\fin$ to simplify notation.\medskip

\paragraph{Partial Wasserstein Metrics on Measures}
The definition of the partial Wasserstein distance using couplings can be immediately generalized to the setting of measures. Let $\mu, \mu' \in \cM^+_\fin$. A \emph{coupling} between $\mu$ and $\mu'$ is a measure $\sigma \in M^+(\Omega \times \Omega)$ such that
\[
    (\pi_1)_*\sigma = \mu \pmod{M^+(\Delta)} \quad \text{and} \quad (\pi_2)_*\sigma = \mu' \pmod{M^+(\Delta)}.
\]
The \emph{partial $p$-Wasserstein metric for measures} $W^\partial_p[d]: \cM^+_\fin \times \cM^+_\fin \to [0, \infty]$ is defined by
\begin{align}
    W_p^\partial[d](\mu, \mu') = \inf_{\sigma} \left( \int_{X \times X} d(x,y)^p d\sigma(x,y)\right)^{1/p},
\end{align}
where the infimum is taken over all couplings between $\mu$ and $\mu'$. However, note that this distance can be possibly infinite since measures may be unbounded. In order to rectify this, we define the \emph{total $p$-persistence} of $\mu$ as
\[
    \Pers_p(\mu) \coloneqq W_p^\partial(\mu, 0),
\]
where $0$ is the zero measure, and define the \emph{space of finite $p$-persistence measures} to be
\[
    \cM^+_{\fin, p}(\Omega, \Delta) \coloneqq \{ \mu \in \cM^+_{\fin}(\Omega, \Delta) \, : \, \Pers_p(\mu) < \infty\}.
\]
The function $W^\partial_p[d]$ restricted to $\cM^+_{\fin, p}$ is an honest metric. Finally, we note that the metric space of persistence diagrams isometrically embeds into this space of persistence measures,
\[
    (\cD^+_{\fin}(\Omega, \Delta), W^\partial_p[d]) \hookrightarrow (\cM^+_{\fin, p}(\Omega, \Delta), W^\partial_p[d]),  
\]
with the embedding given by
\[
    \{z_i\}_{i=1}^n \mapsto \sum_{i=1}^n \delta_{z_i},
\]
where $\delta_z$ is the Dirac measure at $z \in \Omega$. 

\section{A Banach Space for Persistence Diagrams}
\label{sec:pd_banach}
The space of persistence measures already provides significantly more structure than the space of persistence diagrams; however, we still require additional structure in order to define metrics on paths of peristence diagrams and the path signature. While addition is well defined for positive persistence measures, we require additive inverses and completeness in order to define Stieltjes-type integrals and limits. Furthermore, the metric should be compatible with this linear structure. In other words, we need to further enrich the space of persistence diagrams into a Banach space. 
We briefly summarize several \emph{enrichments} of $\cD_{\fin}^+$ which satisfy some of the desirable properties described above; these are depicted in Figure \ref{fig:venn}.

\begin{figure}
\centering
		\includegraphics[width=0.4\textwidth]{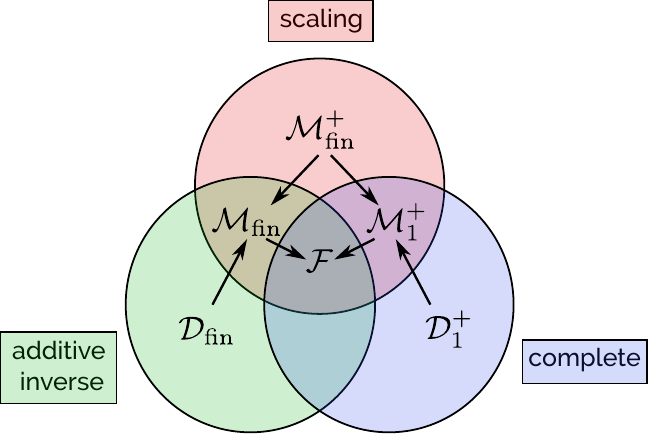}
	\label{fig:venn}
		\caption{Enrichments of the space  $\cD_{\fin}^+$ of finite persistence diagrams and the properties of each; arrows indicate inclusion. Note that $\cD_{\fin}^+$ includes into all of these enriched spaces.}
\end{figure}

\begin{enumerate}
    \item \textbf{Scaling:} The perspective of persistence diagrams as measures was introduced in~\cite{chazal_structure_2016} through counting measures. The extension to Radon measures $\cM_{\fin}^+$ was initiated in~\cite{divol_understanding_2021}, which allows for a notion of scaling.
    
    \item \textbf{Completeness:} The finiteness assumption on persistence diagrams introduces an obstruction to being a complete metric space. A resolution to this problem was developed in~\cite{mileyko_probability_2011} by considering diagrams with countably many points, but with a finite $1$-Wasserstein distance to the empty diagram, denoted by $\cD^+_1$. A similar condition was studied for Radon measures with infinite mass in~\cite{divol_understanding_2021}, denoted $\cM_1^+$.
    
    \item \textbf{Additive Inverse:} Additive inverses were considered in~\cite{bubenik_virtual_2020} using group completion. The group completion of $\cD_\fin^+$ is the space of diagrams with signed coefficients on points, called \emph{virtual persistence diagrams} and denoted $\cD_\fin$. By taking the group completion of finite Radon measures, we obtain signed finite Radon measures, denoted $\cM_{\fin, 1}$. 
\end{enumerate}

The center of Figure \ref{fig:venn} is the \emph{Lipschitz-free space} of a metric space~\cite{weaver_lipschitz_2018}; the main result of this section shows that there is an isometric embedding of the space of finite persistence diagrams into this Banach space. We begin by introducing the notion of metric pairs from~\cite{bubenik_universality_2020,bubenik_virtual_2020} and their associated quotient  spaces, followed by the discussion on Lipschitz-free spaces.

\subsection{Metric Pairs and Quotient Metric Spaces}
The spaces of persistence diagrams and their enrichments can be framed in a more general setting than the upper half plane. We define a \emph{metric pair} $(\generalspace,d,A)$ to be a locally compact Polish emtric space $(\generalspace,d)$, along with a closed subset $A \subset \generalspace$. We use the symbols $(\generalspace,d,A)$ to denote an arbitrary metric pair, and reserve the symbols $(\Omega, d,\Delta)$ defined in~\Cref{eq:pers_omega_delta} to be the specific pair corresponding to ordinary persistence diagrams, where $d$ is understood to be the Euclidean metric. Note that all of the constructions in~\Cref{ssec:metric_pers} can be formulated for arbitrary metric pairs. 

In light of the previous discussion on persistence diagrams and measures, the subset $A$ is treated as a region which does not contribute to the partial Wasserstein distance. Thus, it is convenient to quotient out the subset $A$ in a metric pair to obtain a pointed metric space.

\begin{definition}
    Let $(\generalspace,d,A)$ be a metric pair. We define the \emph{quotient metric space} $(\qgeneralspace, \tilde{d}, *)$ as a pointed metric space as follows. Let $\qgeneralspace \coloneqq (\generalspace-A) \cup \{*\}$ denote the quotient set obtained by collapsing $A$ to a point $*$, which we treat as the basepoint. The metric $\tilde{d}$ on $\qgeneralspace$ is defined by
    \begin{align*}
        \tilde{d}(x,y) \coloneqq \min \left\{ d(x,y), d(x,A) + d(y,A)\right\},
    \end{align*}
    where $d(x,A) \coloneqq \inf_{y \in A} \{ d(x,y)\}$. By abuse of notation, we view the metric $d$ as a function on the $\qgeneralspace \times \qgeneralspace$ given by $(x,y) \mapsto d(x,y)$ for $x,y \in \generalspace - A$, $(x,*) \mapsto d(x,A)$, and $(*, *) \mapsto 0$. 
\end{definition}

This is shown to be a metric  in~\cite{weaver_lipschitz_2018, bubenik_universality_2020}. Note that pointed metric spaces $(\qgeneralspace, \tilde{d}, *)$ are special cases of metric pairs, and all the definitions above for enriched spaces still hold. When the basepoint is clear, we will suppress it from the notation for the enriched spaces of diagrams. For instance, we set $\cD_{\fin}^+(\qgeneralspace) \coloneqq  \cD_{\fin}^+(\qgeneralspace, *)$. Furthermore, partial Wasserstein distances correspond to ordinary Wasserstein distances on the quotient space; details can be found in Appendix~\ref{apx:quotient_metric}.

\subsection{Lipschitz-Free Spaces}

We will now discuss the construction of a Lipschitz-free space of a metric space~\cite{weaver_lipschitz_2018}, which is a Banach space whose linear structure coincides with the metric structure of the underlying space. We view this as a generalization of the space of persistence measures. The material in this section is primarily adapted\footnote{We keep the notation in this section consistent with the previous section, but this differs slightly from the standard notation for Lipschitz-free spaces. Lipschitz-free spaces are called Arens-Eells spaces in~\cite{weaver_lipschitz_2018}.} from~\cite{weaver_lipschitz_2018, cuth_structure_2016}.\medskip
 
Let $(\generalspace,d,*)$ be a pointed metric space, and let $\cV_{\fin}(\generalspace)$ denote the free vector space on $\generalspace -*$. Given an element $\alpha \in \cV_\fin(\generalspace)$, we can decompose it as $\alpha = \alpha^+ - \alpha^-$, where $\alpha^+$ and $\alpha^-$ are both positive. Note that we can view $\alpha^+$ and $\alpha^-$ as positive measures in $\cM^+(\generalspace)$, and thus equip $\cV_\fin(\generalspace)$ with a norm given by $\|\alpha\|_1 \coloneqq W_1^\partial[d](\alpha^+, \alpha^-)$.

\begin{definition}
    Let $(\generalspace, d, *)$ be a pointed metric space. The \emph{Lipschitz-free space} of $\generalspace$, denoted $\cF(\generalspace)$, is the completion of $\cV_\fin(\generalspace,*)$ with respect to the $1$-Wasserstein norm. 
\end{definition}

As the name implies, the Lipschitz-free space is related to the space of Lipschitz functions. Given a function $f: \generalspace \rightarrow \R$, its \emph{Lipschitz constant} is defined to be
\begin{equation}
    \|f\|_{Lip} \coloneqq \sup_{x \neq y \in \generalspace} \left\{ \frac{|f(x) - f(y)|}{d(x,y)} \right\} \in [0, \infty].
\end{equation}
We define the space of Lipschitz functions on $(\generalspace, d, *)$ to be
\begin{equation}
    \Lip_0(\generalspace) \coloneqq \{ f: \generalspace \rightarrow \R \, : \, \|f\|_{Lip}< \infty, \,f(*) = 0 \}
\end{equation}
where we treat the Lipschitz constant as a norm on $\Lip_0(X)$. Furthermore, $\Lip_0(\generalspace)$ is a Banach space with respect to this Lipschitz norm.\medskip

\paragraph{Universal Property}
The Lipschitz-free space are characterized by a universal property.
\begin{theorem}[\cite{weaver_lipschitz_2018}]
\label{thm:lipschitz_free}
 Let $(\generalspace, d, *)$ be a pointed metric space. Then the map $\iota: \generalspace \rightarrow \cF(\generalspace)$ defined by $\iota(x) \coloneqq \delta_x$ isometrically embeds $\generalspace$ in $\cF(\generalspace)$. If $V$ is any Banach space, and $f: \generalspace \rightarrow V$ is a Lipschitz map which preserves the basepoint, there exists a unique bounded linear map $\tilde{f}: \cF(\generalspace) \rightarrow V$ such that $\tilde{f} \circ \iota = f$; in other words, the following diagram commutes
 \[
 \begin{tikzcd}
    \generalspace \ar["f"]{r} \ar["\iota", swap]{d} & V \\
 \cF(\generalspace) \ar[dashed, "\tilde{f}", swap]{ru}
 \end{tikzcd}.
 \]
 Furthermore, $\|\tilde{f}\|_{Lip} = \|f\|_{Lip}$. 
\end{theorem}

The unique extension is defined by first extending $f$ to $\overline{f}: \cV_\fin(\generalspace) \rightarrow V$, defined as follows. Any $\alpha \in \cV_\fin(\generalspace)$ can be written as $\alpha = \sum_{i=1}^n a_i \delta_{z_i}$, where the $z_i \in \generalspace - \{*\}$ are distinct. Then, 
\begin{align}
\overline{f}(\alpha) \coloneqq \sum_{i=1}^n a_i f(z_i).
\end{align}
This is an extension to the universal properties discussed for $\cD_\fin^+(\generalspace)$ and $\cD_\fin(\generalspace)$ in~\cite{bubenik_universality_2020, bubenik_virtual_2020}. Furthermore, this provides a simple method to prove the stability of maps from $\cF(\generalspace)$ into a Banach space, extending the characterization of Lipschitz maps on $\cM^+_1(\generalspace,A)$  in~\cite{divol_understanding_2021}. \medskip

\paragraph{Duality}

There is an equivalent definition of the $1$-Wasserstein norm by viewing $\cV_\fin(\generalspace)$ as a subspace of $\Lip_0(\generalspace)^*$ and using the dual norm; this is often called \emph{Kantorovich-Rubinstein duality}. Given $f \in \Lip_0(\generalspace)$, and $\alpha \in \cV_\fin(X)$, the pairing is defined to be $\langle \alpha, f \rangle = f(\alpha)$. 

\begin{lemma}
    For any $\alpha \in \cV_\fin(\generalspace)$, we have
    \begin{align*}
        \|\alpha\|_1  = \sup_{f \in \Lip_0(\generalspace)} \frac{\langle \alpha, f \rangle}{\|f\|_{Lip}} .
    \end{align*}
    Furthermore, the dual norm uniquely extends to a norm on $\cF(\generalspace)$ which coincides with $\|\cdot\|_{1}$. 
\end{lemma}

Now that $\cF(\generalspace)$ is realized as a Banach space situated within $\Lip_0(\generalspace)^*$, a natural question is whether $\cF(X)$ is simply the entire dual space. In fact, $\cF(\generalspace)$ is the \emph{predual} of $\Lip_0(\generalspace)$, such that $\cF(\generalspace)^* \cong \Lip_0(\generalspace)$~\cite{weaver_lipschitz_2018}. Furthermore, if $\generalspace$ is infinite, then $\cF(\generalspace)$ is not reflexive~\cite{weaver_lipschitz_2018} and is thus strictly smaller than the dual space of $\Lip_0(\generalspace)$. \medskip

\paragraph{Isometric Embeddings}
Our main result in this section relates the Lipschitz-free space back to persistence diagrams, providing an isometric embedding of $\cD_\fin^+(\generalspace)$ into a Banach space.

\begin{theorem}
    Let $(\generalspace, d, *)$ be a pointed metric space. The inclusion $\cD_\fin^+(\generalspace) \hookrightarrow \cF(\generalspace)$ is an isometric embedding.
\end{theorem}
\begin{proof}
    The space of finite persistence diagrams on $(\generalspace, d, *)$, $\cD_\fin^+(\generalspace)$, isometrically embeds into $\cV_\fin(\generalspace)$. Then, since $\cF(\generalspace)$ is the completion of $\cV_\fin(\generalspace)$, the inclusion $\cD_\fin^+(\generalspace) \hookrightarrow \cF(\generalspace)$ is an isometric embedding.
\end{proof}

Furthermore, the following result clarifies that $\cF(\generalspace)$ is also the completion of finite signed measures. Let $\cM_{\fin}(\generalspace)$ denote the space of finite Radon measures on $\generalspace-*$. Given $\mu \in \cM_{\fin}(\generalspace)$, we can decompose it into finite positive measures as $\mu = \mu^+ - \mu^-$. We can then equip $\cM_{\fin}(\generalspace)$ with the (possibly infinite) norm $\|\mu\|_1 \coloneqq W^\partial_1[d](\mu^+, \mu^-)$, and denote $\cM_{\fin,1}(\generalspace)$ to be the restriction to finite norm elements. 

\begin{proposition}[\cite{ostrovska_generalized_2019}] \label{prop:measures_embedded_lfspace}
    Let $(X, d, *)$ be a Polish pointed metric space. Then $(\cV_\fin(X), \|\cdot\|_1)$ is dense in $(\cM_{\fin, 1}(X), \|\cdot\|_{1})$. Thus, $\cF(X)$ is also the completion of $(\cM_{\fin, 1}(X), \|\cdot\|_{1})$.
\end{proposition}

Therefore, the Lipschitz-free space $\cF(X)$ generalizes both persistence diagrams and measures, and provides the desired Banach space for persistence diagrams. \medskip

\paragraph{Discussion on Explicit Representations}
One of the main limitations of the Lipschitz-free space is the lack of an explicit representation of objects. Indeed, there are only a few explicit constructions of such spaces: the Lipschitz-free space of $\R$ is isomorphic to $L^1(\R$)~\cite{weaver_lipschitz_2018}, and the Lipschitz-free space for convex subsets $C \subset \R^N$ is isometric to a quotient of $L^1(C, \R^N)$~\cite{cuth_isometric_2017}.

The problem of representing elements of $\cF(\generalspace)$ as measures is studied in~\cite{aliaga_integral_2020}, where it is shown that every positive element $\alpha \in \cF(\generalspace)$, where $\langle \alpha, f\rangle \geq 0$ for all $f \in \Lip_0(X)$ such that $f \geq 0$, can be represented as a measure on $\generalspace$. Thus, every \emph{majorizable} element $\alpha \in \cF(\generalspace)$, where $\alpha = \alpha^+ - \alpha^-$ is written as the difference of two positive elements $\alpha^+, \alpha^- \in \cF(\generalspace)$, can be represented as the formal difference of two measures. Another approach to the representation of $\cF(\generalspace)$ in terms of measures is through finite measures on $\generalspace^2_\Delta = \generalspace^2 - \Delta_\generalspace$, where $\Delta_\generalspace$ is the diagonal in $\generalspace^2$. In fact, it is shown in Theorem 2.37 of~\cite{weaver_lipschitz_2018} that $\cF(\generalspace)$ is identified with a quotient of $\ell^1(\generalspace^2_\Delta)$, the space of finite measures on $\generalspace^2_\Delta$ with countable support.

While the lack of an explicit unique representation of Lipschitz-free spaces in terms of measures makes its application to computational problems more challenging, we can use this structure to define the path signature on bounded variation paths of persistence diagrams in the next section. We adapt the signature method for computational purposes by using feature maps in~\Cref{sec:computation} and~\Cref{sec:applications}.

\section{Paths of Persistence Diagrams and the Path Signature}
\label{sec:pd_paths}

Now we turn to the primary objects of interest in this paper: paths of persistence diagrams\footnote{A related object is the \emph{persistence vineyard}, ~\cite{cohen-steiner_vines_2006}. Vineyards summarize how individual persistent homology classes evolve over time, while our methods treat persistence diagrams independently at each time step, and thus contain strictly less information.}. As such, we will fix our metric pair to be $(\Omega, d, \Delta)$ as defined in~\Cref{eq:pers_omega_delta}, and let $(\wOmega, \wid, *)$ be the corresponding quotient space. Such paths naturally arise in topological data analysis by applying persistent homology to dynamic data sets at each time point, and are thus straightforward to compute.  \medskip

In this section, we introduce the Banach space of bounded variation paths of persistence diagrams, which provides sufficient structure to study statistics such as the expectation of probability measures on this space. Next, we introduce the \emph{path signature}, a reparametrization invariant characterization of paths valued in a Banach space, and show that it is universal and characteristic. Due to the lack of explicit representations of Lipschitz-free spaces, such signatures are infeasible to compute in practice. However, this still provides a theoretical framework which can potentially be extended to study the dynamic topological structure of highly irregular stochastic systems.

\subsection{Bounded Variation Paths of Persistence Diagrams}
\begin{definition}
    Suppose $(V, \|\cdot\|)$ is a Banach space. Let $\gamma: [0,1] \rightarrow V$. The \emph{$p$-variation of $\gamma$} is defined to be
    \begin{equation}
        |\gamma|_{p-var} \coloneqq  \sup_{\Pi} \left(\sum_{i=1}^n \|\gamma(t_{i})- \gamma(t_{i-1})\|^p \right)^{1/p}
    \end{equation}
    where $\Pi = \{0 = t_0 < t_1 < \ldots < t_{n-1} < t_n = 1 \}$ is a partition of the interval $[0,1]$, and the supremum is taken over all possible partitions. The space of all \emph{bounded $p$-variation paths} is denoted by
    \begin{equation}
        C^{p-var}([0,1], V) \coloneqq \{ \gamma: [0,1] \rightarrow V \, : \, |\gamma|_{p-var} < \infty\}.
    \end{equation}
    Furthermore, if we require all paths to begin at a specified base point $b\in V$, we will denote the space by $C^{1-var}_b([0,1], V)$. Unless otherwise specified, we will use the unit interval as the domain, and thus simplify the notation as
    \begin{align*}
        C^{1-var}(V) \coloneqq C^{1-var}([0,1], V), \quad \quad C^{1-var}_b(V) \coloneqq C^{1-var}_b([0,1], V).
    \end{align*}
\end{definition}

\begin{proposition}[\cite{chistyakov_maps_1998}]
   Let $(V, \|\cdot\|)$ be a Banach space and $p \geq 1$. Then $C^{p-var}([0,1], V)$, equipped with the norm $\|\gamma\|_p \coloneqq |\gamma(0)| + |\gamma|_{p-var}$, is a Banach space.
\end{proposition}

For paths of bounded $1$-variation, we will simply call them paths of bounded variation. In particular, we will be working with the Banach space of bounded variation paths of persistence diagrams, $C^{1-var}(\cF(\widetilde{\Omega}))$.

\subsection{The Path Signature}
The \emph{path signature} is a powerful reparametrization-invariant characterization for multivariate (and possibly infinite-dimensional) time series. We use the path signature to characterize paths of persistence diagrams as elements of $\cF(\widetilde{\Omega})$. Thus, we require the theory of signatures for paths valued in infinite dimensional Banach spaces, for which we refer to~\cite{lyons_system_2007, boedihardjo_note_2015}. The finite dimensional theory can be made more explicit, and we provide some brief remarks along these lines to aid exposition. A more thorough introduction to the finite-dimensional theory in machine learning can be found in~\cite{giusti_iterated_2020, chevyrev_primer_2016}. \medskip

The path signature is a map which takes paths to a formal power series of tensors. Let $(V, \|\cdot\|)$ be a Banach space. Formal power series $\bs = (\bs_k)_{k=0}^\infty \in \prod_{k=0}^\infty V^{\otimes k}$ consist of a sequence of elements $\bs_0 = 1 \in \R$ and $\bs_k \in V^{\otimes k}$, $k \geq 1,$ where $V^{\otimes k}$ denotes the completion of the algebraic $k$-fold tensor product of $V$ under an admissible tensor norm\footnote{If $V$ is a Hilbert space, then $V^{\otimes k}$ is the standard tensor product; however, in the case of infinite-dimensional Banach spaces, some subtle technicalities arise, and we refer the interested reader to~\cite{lyons_system_2007,boedihardjo_note_2015} for further details.} $\|\cdot\|_k$. In particular, each $V^{\otimes k}$ is a Banach space. Addition and scalar multiplication are determined degreewise.

We define a norm on $\prod_{k=0}^\infty V^{\otimes k}$ by
\begin{align}
\label{eq:tensorps_norm}
    \|\bs\| = \left(\sum_{k=1}^\infty \|\bs_k\|_k^2\right)^{1/2},
\end{align}
which may be infinite for some terms. Note that if $V$ is a Hilbert space, this norm coincides with the Hilbert space norm. The algebraic structure we are interested in is the following Banach space of finite norm power series
\begin{align*}
    T\ps{V} \coloneqq \left\{ \bs \in \prod_{k=0}^\infty V^{\otimes k} \, : \, \|\bs\| < \infty \right\}.
\end{align*}
We will call this the \emph{tensor algebra}. The \emph{truncated tensor algebra at degree $M$} is
\begin{align*}
    T^{(M)}(V) \coloneqq  \prod_{k=0}^M V^{\otimes k}
\end{align*}
where the finite norm condition is immediately satisfied.

\begin{definition}
    Let $(V, \|\cdot\|)$ be a Banach space. Let $\gamma \in C^{1-var}(V)$ be a bounded variation path, and $m \geq 1$. The \emph{path signature of $\gamma$ at level $m$,} $S_m: C^{1-var}(V) \rightarrow V^{\otimes m},$ is
    \begin{equation}
    \label{eq:path_signature}
        S_m(\gamma) \coloneqq \int_{\Delta^m(0,1)} d\gamma(t_1) \otimes d\gamma(t_2) \otimes \ldots \otimes d\gamma(t_m),
    \end{equation}
    where $\Delta^m(a,b) \coloneqq \{a \leq t_1 < \ldots < t_m \leq b\}$. This is an iterated integral in the Riemann-Stieltjes sense. The \emph{path signature of $\gamma$}, denoted $S: C^{1-var}(V) \rightarrow T\ps{V}$, is defined by
    \begin{align*}
        S(\gamma) \coloneqq (1, S_1(\gamma), S_2(\gamma), \ldots).
    \end{align*}
    The \emph{truncated path signature of $\gamma$ at level $M$}, denoted $S_{\leq M} : C^{1-var}(V) \to T^{(M)}(V)$, is 
    \[
        S_{\leq M}(\gamma) = (1, S_1(\gamma), S_2(\gamma), \ldots, S_M(\gamma)).
    \]
\end{definition}

The truncation of the signature is justified due to its exponential decay.

\begin{lemma}[Lemma 2.1.1~\cite{lyons_differential_1998}]
    Let $\gamma \in C^{1-var}(V)$. Then for $m \geq 1$,
    \begin{align*}
        \|S_m(\gamma)\| \leq \frac{|\gamma|_{1-var}^m}{m!}.
    \end{align*}
\end{lemma}

\begin{remark}
    If $V = \R^N$ is finite dimensional with an ordered basis $(e_1, \ldots, e_N)$, then a path $\gamma \in C^{1-var}(V)$ can be represented in terms of those coordinates $\gamma = (\gamma_1, \ldots, \gamma_N)$. Furthermore, $V^{\otimes k}$ is the standard tensor product of vector spaces, and has a canonical basis $\{e_I\}$, where $I = (i_1, \ldots, i_k)$ is a multi-index with $i_j \in [N] \coloneqq \{1, \ldots, N\}$. Then, we may express the $e_I$ component of the path signature $S(\gamma)$ as
    \begin{align*}
        S^I(\gamma) &= \int_{\Delta^m(0,1)}\gamma'_{i_1}(t_1) \ldots \gamma'_{i_m}(t_m) dt_1 \ldots dt_m.
    \end{align*}
\end{remark}

There are two standard properties of the path signature that follow immediately from the definitions. For $\gamma \in C^{1-var}(V)$, then we have
\begin{itemize}
    \item translation invariance, where $S(\gamma + v) = S(\gamma)$ for any $v \in V$;
    \item reparametrization invariance, where $S(\gamma \circ \phi) = S(\gamma)$, for a non decreasing bijection $\phi:[0,1] \rightarrow [0,1]$.
\end{itemize}
Furthermore, we can describe the kernel of the signature map.

\begin{definition}
    Let $\alpha, \beta \in C^{1-var}(V)$. The concatenated path $\alpha * \beta$ is defined to be
    \begin{align*}
        (\alpha * \beta)(t) & = \left\{
            \begin{array}{cl}
            \alpha(2t) & : t \in [0,1/2) \\
            \alpha(1) - \beta(0) + \beta(2t-1) & : t \in [1/2, 1].
            \end{array}
        \right.
    \end{align*}
    Given a path $\alpha \in C^{1-var}(V)$, its \emph{inverse} $\alpha^{-1} \in C^{1-var}(V)$ is defined by $\alpha^{-1}(t) \coloneqq \alpha(1-t)$.
\end{definition}

\begin{definition}[\cite{hambly_uniqueness_2010}]
    A path $\gamma \in C^{1-var}(V)$ is \emph{tree-like} if there exists a nonnegative real-valued continuous function $h:[0,1] \rightarrow \R_{\geq 0}$ such that $h(0) = h(1) = 0$ and
    \begin{align*}
        \|\gamma(t) - \gamma(s) \| \leq h(s) + h(t) - 2 \inf_{u \in [s,t]} h(u).
    \end{align*}
    Such a function $h$ is called the \emph{height function}. Two paths $\alpha, \beta \in C^{1-var}(V)$ are \emph{tree-like equivalent}, denoted $\alpha \sim_t \beta$, if $\alpha * \beta^{-1}$ is tree-like. 
\end{definition}

\begin{theorem}[\cite{boedihardjo_signature_2016}]
    Let $\alpha, \beta \in C^{1-var}(V)$. Then $S(\alpha) = S(\beta)$ if and only if $\alpha$ and $\beta$ are tree-like equivalent. 
\end{theorem}

We denote the quotient of bounded variation paths by this equivalence relation by
\begin{align}
\label{eq:treelike_equivclass}
    \widetilde{C}^{1-var}(V) \coloneqq C^{1-var}(V) /\sim_t.
\end{align}

Finally, the continuity of the truncated path signature will allow us to provide a stable feature map for paths of persistence diagrams.
\begin{theorem}[\cite{lyons_system_2007}]
\label{thm:signature_stability}
    Let $\alpha, \beta \in C^{1-var}(V)$ and $\ell \geq \max \{ |\alpha|_{1-var}, |\beta|_{1-var}\}$. There exists a constant $C > 0$ such that if $|\alpha - \beta|_{1-var} < \epsilon$, then
    \begin{align*}
        \|S_m(\alpha) - S_m(\beta)\| \leq \epsilon \frac{\ell^{m-1}}{C^{m-1}m!}.
    \end{align*}
    In particular, this implies that
    \begin{align*}
        \|S(\alpha) - S(\beta)\| \leq \frac{\epsilon C}{\ell} \exp\left(\left(\frac{\ell}{\sqrt{2}C}\right)^2\right).
    \end{align*}
\end{theorem}

\subsection{A Characterization for Paths of Persistence Diagrams}
In this section, we discuss a fundamental result from~\cite{chevyrev_signature_2022} which shows that the path signature is universal and characteristic. We apply this to bounded variation paths of persistence diagrams, and obtain a characterization for such objects. \medskip

The perspective in this section is to view the path signature $S: C^{1-var}(V) \rightarrow T\ps{V}$ as a feature map and consider the universal and characteristic property with respect to $\cG = C_b(C^{1-var}(V))$, continuous bounded functions on the space of bounded variation paths. As described in~\Cref{def:univ_char}, we must first ensure that $\langle \ell, \Phi(\cdot)\rangle_V \in \cG$ for all $\ell \in T\ps{V}'$. In particular, we must show that $\langle \ell,\Phi(\cdot)\rangle_V: \cX \rightarrow \R$ is bounded. This will be achieved using tensor normalization. An explicit construction of a tensor normalization is given in~\cite{chevyrev_signature_2022}. A technical point is the strict topology, which is discussed in~\Cref{apxsec:strict}.

\begin{definition}
    A \emph{tensor normalization} is a continuous injective map of the form
    \begin{align*}
        \Lambda : T\ps{V} &\rightarrow \{ \mathbf{t} \in T\ps{V} \, : \, \|\mathbf{t}\| \leq K \} \\
        \mathbf{t} = (\mathbf{t}_k)_{k=0}^\infty & \mapsto (\lambda(\mathbf{t})^k \mathbf{t})_{k=0}^\infty
    \end{align*}
    where $K > 0$ is a constant, $\lambda : T\ps{V} \rightarrow (0, \infty)$ is a function.
\end{definition}

\begin{theorem}[\cite{chevyrev_signature_2022}]
\label{thm:univ_char_general}
Let $\Lambda: T\ps{V} \rightarrow T\ps{V}$ be a tensor normalization. The normalized signature
\begin{align*}
    \Phi: \widetilde{C}^{1-var}(V) \rightarrow T\ps{V}, \quad \Phi = \Lambda \circ S,
\end{align*}
\begin{enumerate}
    \item is a continuous injection from $\widetilde{C}^{1-var}(V)$ into a bounded subset of $T\ps{V}$,
    \item is universal to $\cG \coloneqq C_b(\widetilde{C}^{1-var}(V))$ equipped with the strict topology, and 
    \item is characteristic to the space of finite regular Borel measures on $\widetilde{C}^{1-var}(V)$. 
\end{enumerate}
\end{theorem}

Applying this to the case of $V = \cF(\widetilde{\Omega})$, we obtain a universal and characteristic map for bounded variation paths of persistence diagrams.

\begin{theorem}
    Let $\Lambda: T\ps{\cF(\wOmega)} \rightarrow T\ps{\cF(\wOmega)}$ be a tensor normalization. The normalized signature $\Lambda \circ S: \widetilde{C}^{1-var}(\cF(\widetilde{\Omega})) \rightarrow T\ps{\cF(\wOmega)}$ is injective, universal with respect to $C_b(\widetilde{C}^{1-var}(V))$ with respect to the strict topology, and characteristic with respect to the space of finite Borel measures on $\widetilde{C}^{1-var}(V)$.
\end{theorem}

However, there is an obstruction to using this signature as a feature map in practice: it is not readily computable. Indeed, the Banach space $\cF(\widetilde{\Omega})$ is infinite dimensional, and thus even the first level of the signature will be infinite dimensional. Furthermore, there is no clear basis for $\cF(\widetilde{\Omega})$ which can be effectively used for computation; this is the same problem as vectorizing persistence diagrams. We resolve this in the next section by approximating the signature by using a feature map for $\cF(\widetilde{\Omega})$ as well as truncation of the signature.

\begin{remark}
    The definition of Lipschitz-free spaces holds for any pointed metric space. Let $\sPM_{ft}$ denote the space of persistence modules of finite type (those which are a finite sum of finite interval modules) equipped with the interleaving distance $d_I$~\cite{chazal_structure_2016}. Let $(\widetilde{sPM}_{ft}, d_I, 0_M)$ denote the pointed metric space of observable isomorphism classes of finite type persistence modules, where two modules $U, V \in \sPM_{ft}$ are identified if $d_I(U,V) = 0$, and $0_M$ denotes the zero module ~\cite{chazal_observable_2016}. By considering the Lipschitz-free space $\cF(\widetilde{sPM}_{ft})$, we can consider the path signature for paths of persistence modules. Such a procedure can also be used to define the signature for paths of multiparameter~\cite{lesnick_theory_2015} or generalized~\cite{kim_generalized_2021} persistence modules. 
\end{remark}

While the signature from this section may not be directly amenable to computation, it forms a foundation for the investigation of stochastic time-varying systems using persistent homology. In particular, the results of this section hold in the much more general setting of \emph{rough paths}, an approach to stochastic analysis which allows us to perform integrals of highly irregular paths~\cite{friz_course_2020}. When studying time-varying systems in the presence of irregular noise such as Brownian motion (which has unbounded $p$ variation for $p \leq 2$ a.s.~), the bounded variation hypothesis often does not hold. For example, consider a stochastic point cloud $\bB(t)$ consisting of two independent Brownian motions $B(t)$ and $B'(t)$. At each time point $t$, the reduced $H_0$ persistence diagram has exactly one point $\PD_0(\bB(t)) = \{(0, |B(t)-B'(t)|/2)\}$, which will also be of unbounded variation. 
Thus, rough paths may allow us to describe the persistent homology of stochastic time-varying systems, which we leave to future work. 

\section{Methods for Application to Data}
\label{sec:computation}

The previous section provided the framework to theoretically study bounded variation paths of persistence diagrams by defining the signature of these paths. In this section, we focus on adapting these tools to provide computable methods for data analysis. Here we make two modifications to the space of persistence diagrams:
\begin{enumerate}
    \item We use birth/lifetime coordinates rather than birth/death coordinates. Lifetime (or persistence) coordinates are more interpretable, and simplify the description of partial Wasserstein metrics, as the distance to the boundary is simply the lifetime coordinate. Birth/lifetime coordinates have previously been used in the description of persistence images~\cite{adams_persistence_2017}. 
    \item We use bounded persistence diagrams, which are supported on the metric pair 
    \begin{align}
        \Omega_T \coloneqq [0,T]^2 \quad \text{and} \quad \Delta_T \coloneqq \{ (b, 0) \in \Omega_T \, : \, b \in [0,T]\}.
    \end{align}
    where $T> 0$. We use $(b,p) \in \Omega_T$ to refer to \emph{birth} and \emph{persistence}.
    Furthermore, let $(\wOmega_T, *)$ denote the corresponding quotient metric space.
    In practical applications, this is a mild assumption since we often work with finite metric spaces built from data.
\end{enumerate}

We begin by introducing a novel feature map for static persistence diagrams by computing all moments.
We then recall the discrete path signature~\cite{kiraly_kernels_2019}, discuss computational aspects of the path signature, and discuss its application to time-varying persistence diagrams.

\subsection{Persistence Moments}

Given that a persistence diagram is a collection of points supported on $\wOmega_T$, one of the simplest ways to vectorize a persistence diagram is by computing various statistics of these points, such as the mean, standard deviation, and quantiles~\cite{asaad_persistent_2022,chung_persistence_2022,pun_persistent-homology-based_2018}. In fact, a recent survey of vectorization methods~\cite{ali_survey_2022} has shown that such \emph{persistence statisics} consistently outperform other vectorization methods on various classification tasks.

However, certain persistence statistics such as the mean birth time is not stable, and furthermore, they are certainly not injective on the space of finite bounded persistence diagrams. In this section, we introduce \emph{persistence moments}, a modification of persistence statistics which provides a stable and injective map.

Let $\alpha_1, \alpha_2 \in \N$. Given a measure $\mu \in \cM_{\fin, 1}^+(\wOmega_T)$, the \emph{$(\alpha_1, \alpha_2)$-moment} of $\mu$ is defined by
\[
    \int_{\wOmega_T - \{*\}} b^{\alpha_1} \cdot p^{\alpha_2} \,d\mu(b,p).
\]

Our starting point is the classical result that the moment generating function of a finite Borel measure on $\R^2$ with compact support completely characterizes the measure. In other words, computing all of the moments of a persistence measure would completely characterize the measure. However, as we have previously mentioned, certain moments are not Lipschitz-continuous with respect to the partial $1$-Wasserstein metric.

\begin{example}
    Let $\delta > 0$, and consider persistence daigrams with a single point $D_b = \{ (b, \delta)\}$, where $b \in [0, T]$. Note that the $(\alpha_1, 0)$-moment of $D_b$ is exactly $b^{\alpha_1}$. Let $b, b' \in [0,T]$ such that $b - b' > 2\delta$. Then, the partial $1$-Wasserstein distance between $D_b$ and $D_{b'}$ is $W_1^\partial[d](D_b, D_{b'}) = 2\delta$; however, the difference between their $(\alpha_1, 0)$-moments is $|b^{\alpha_1} - (b')^{\alpha_1}|$. Because $\delta$ can be made arbitrarily small, the ``pure-birth'' $(\alpha_1, 0)$-moments are not Lipschitz.
\end{example}

The crucial observation is that omitting such ``pure-birth moments'' still allows us to characterize measures, while being stable with respect to $p$-Wasserstein metrics; this observation was also made in earlier work on tropical coordinates on the space of persistence diagrams~\cite{kalisnik_tropical_2019, adcock_ring_2016}. Let $\R[[e_1, e_2]]$ denote the formal power series algebra in two (commutative) variables $e_1$ and $e_2$, where we denote an element $Q = \sum_{\alpha_1, \alpha_2=0}^\infty Q_{\alpha_1, \alpha_2} e_1^{\alpha_1} e_2^{\alpha_2} \in \R[[e_1, e_2]]$ by its coefficients $(Q_{\alpha})$. Similar to the tensor algebra, we equip this space with the (possibly infinite) inner product given by $\langle Q, R\rangle = \sum_{\alpha} Q_\alpha \cdot R_\alpha$, and restrict ourselves to the Hilbert space of finite norm elements
\begin{align}
    \cR \coloneqq \left\{Q \in \R[[e_1, e_2]] \, : \, \|Q\|< \infty\right\}.
\end{align}

\begin{definition}
    The \emph{polynomial map} $P: \wOmega_T \to \cR$ is defined by
    \begin{align*}
        P_{\alpha_1, \alpha_2}(b,p) = \left(\frac{1}{\alpha_1 ! \alpha_2!}\right)^{1/2} b^{\alpha_1} \cdot p^{\alpha_2} \quad \text{and} \quad P_{\alpha_1, \alpha_2}(*) = 0
    \end{align*}
    for $\alpha_1 \geq 0$ and $\alpha_2 \geq 1$, $P_{\alpha_1, 0} = 0$ for $\alpha_1 > 0$, and $P_{0,0} = 1$. 
    The \emph{persistence moments map} $\wP: \cM_{\fin,1}^+(\wOmega_T) \to \cR$ is defined by
    \begin{align*}
        \wP_{\alpha_1, \alpha_2}(\mu) \coloneqq \int_{\wOmega_T} P_{\alpha_1, \alpha_2}(b,p) \, d\mu(b,p).
    \end{align*}
\end{definition}

We note that this integral is always well defined since the measures are finite and the domain is bounded. Furthermore, the normalization factor in $P_{\alpha_1, \alpha_2}$ is used to ensure the image has finite norm, and to provide an efficient kernel computation.\medskip

\paragraph{Injectivity and Stability of Persistence Moments}

\begin{proposition}
    The persistence moments map $\wP: \cM_{\fin}^+(\wOmega_T) \to \cR$ is injective. 
\end{proposition}
\begin{proof}
    First, we note that the statement of the proposition is equivalent to saying that the polynomial map $P: \wOmega_T \to \cR$ is characteristic (\Cref{def:univ_char}) with respect to finite Radon measures. Let $\cG=C(\wOmega_T)$ be the space of continuous functions on $\wOmega_T$ equipped with the uniform topology. By Theorem~\Cref{thm:duality} and the fact that the dual $\cG$ contains finite Radon measures, we are reduced to proving that the polynomial map is universal.

    First, we note that the linear functionals $\langle \ell, P(\cdot)\rangle : \wOmega_T \to \R$, where $\ell \in \cR$, is an algebra (where the algebraic structure is inherited from the formal power series). Next, the collection of such polynomial functions vanishes nowhere on $\wOmega_T$ and separates points on $\wOmega_T$. Finally, the space $\wOmega_T$ is locally compact, and thus the locally compact Stone-Weierstrass theorem holds, and thus $P$ is universal. 
\end{proof}

\begin{proposition}
    The persistence moments map $\wP: \cM_{\fin}^+(\wOmega_T) \to \cR$ is Lipschitz.
\end{proposition}
\begin{proof}
    First, we note that the persistence moments map is the extension of the polynomial map to the Lipschitz-free space $\cF(\wOmega_T)$, restricted to the finite Radon measures $\cM_{\fin}^+(\wOmega_T)$ (which is contained in $\cF(\wOmega_T)$ by~\Cref{prop:measures_embedded_lfspace}). Thus, by the universal property of Lipschitz-free spaces~\Cref{thm:lipschitz_free}, it remains to prove that the polynomial map $P$ is Lipschitz. 

    We begin with the unnormalized monomials $U_{\alpha_1, \alpha_2}: \wOmega\to \R$ defined by $U_{\alpha_1, \alpha_2}(b,p) = b^{\alpha_1} \cdot p^{\alpha_2}$ where $\alpha_1 \geq 0$ and $\alpha_2 \geq 1$. Consider two points $z, z' \in \wOmega_T$. First, suppose that $\wid(z,z') = d(z,z')$. Then, we have
    \begin{align*}
        |U_{\alpha_1, \alpha_2}(z) - U_{\alpha_1, \alpha_2}(z')| \leq (\alpha_1 + \alpha_2) T^{\alpha_1+\alpha_2} d(z,z').
    \end{align*}
    Next, suppose $\tilde{d}(z,z') = d(z, \Delta_T) + d(z', \Delta_T)$. Let $x, x'$ be the projection of $z, z'$ to the x-axis $\Delta_T$. Then, we have
    \begin{align*}
        |U_{\alpha_1, \alpha_2}(z) - U_{\alpha_1, \alpha_2}(z')| &\leq |U_{\alpha_1, \alpha_2}(z) - U_{\alpha_1, \alpha_2}(*)| + |U_{\alpha_1, \alpha_2}(*) - U_{\alpha_1, \alpha_2}(z'))| \\
        & = (\alpha_1 + \alpha_2) T^{\alpha_1+\alpha_2}d(z, x) + (\alpha_1 + \alpha_2) T^{\alpha_1+\alpha_2}d(z',x') \\
        & = (\alpha_1 + \alpha_2) T^{\alpha_1+\alpha_2}\big( d(z, \Delta_T) + d(z', \Delta_T)\big).
    \end{align*}
    Therefore, $U_{\alpha_1, \alpha_2}$ is Lipschitz continuous with Lipschitz constant $(\alpha_1 + \alpha_2) T^{\alpha_1+\alpha_2}$. \medskip
    
    Now, considering the map $P$, we have
    \begin{align*}
        \|P(z) - P(z')\|^2 &\leq \sum_{m=1}^\infty \sum_{\substack{\alpha_1+\alpha_2 =m,\\ \alpha_2 \geq 1}} \frac{1}{\alpha_1! \alpha_2!} |U_{\alpha_1, \alpha_2}(z) - U_{\alpha_1, \alpha_2}(z')|^2 \\
        & \leq \sum_{m=1}^\infty \sum_{\substack{\alpha_1+\alpha_2 =m,\\ \alpha_2 \geq 1}} \frac{1}{m!} {m \choose \alpha_1} m^2 T^{2m} \tilde{d}(z,z')^2 \\
        & \leq \sum_{m=1}^\infty \frac{m^2 (2T^2)^{m} }{m!} \tilde{d}(z,z')^2,
    \end{align*}
    In order to bound the sum, we note that $m^2 \leq 3^m$ for all $m \geq 1$. Thus, we have 
    \begin{align*}
        C &= \left( \sum_{m=1}^\infty \frac{m^2 (2T^2)^{m} }{m!} \right)^{1/2} \leq \left(\sum_{m=1}^\infty \frac{ (6T^2)^{m} }{m!} \right)^{1/2} \leq \exp(3T^2).
    \end{align*}
    Therefore, the map $P$ is Lipschitz continuous.
\end{proof}

Combining the previous two results, we find that the restriction of the moment map to $\cM^+_{\fin}(\widetilde{\Omega}_T)$ is stable and injective for bounded persistence diagrams, which is sufficient for applications to data. While $\widetilde{P}$ is stable on the Lipschitz-free space $\cF(\widetilde{\Omega}_T)$ by the previous proposition, it is not immediate that the extension to the Lipschitz-free space is still injective. 

\begin{theorem}
\label{thm:moment_map_lipschitz}
    The persistence moments map $\wP : \cM^+_{fin}(\widetilde{\Omega}_T) \rightarrow \cR$ is Lipschitz and injective. 
\end{theorem}

\paragraph{Moment Truncation}
While all of the persistence moments are required for the map to be injective, it may be required to use a truncation of the moment map in practice,
\begin{align*}
    \widetilde{P}_{n}: \cM_{\fin}(\widetilde{\Omega}_T) \rightarrow \cR_n,
\end{align*}
where $\cR_n$ is the restriction of $\cR$ to the set of polynomials of degree at most $n$. Despite the loss of injectivity, the truncation error is small due to the factorial decay of the moments. 

\begin{lemma}
    Given $D = \{z_i = (b_i, p_i)\}_{i=1}^{N} \in \cD^+_\fin(\widetilde{\Omega})$, we have
    \begin{align*}
        \|\widetilde{P}(D) - \widetilde{P}_{n}(D)\| \leq \sqrt{\frac{N}{(n+1)!}} e^{T^2} \cdot (\sqrt{2} T)^{n+1}.
    \end{align*}
\end{lemma}
\begin{proof}
    Expanding the definition of the untruncated and truncated moment maps, we have
    \begin{align*}
        \|\widetilde{P}(D) - \widetilde{P}_{n}(D)\|^2 & \leq \sum_{m=n+1}^\infty \sum_{\substack{\alpha_1+\alpha_2 =m,\\ \alpha_2 \geq 1}} \frac{1}{\alpha_1! \alpha_2!}\sum_{i=1}^{N} \left|b_i^{\alpha_1} \cdot p_i^{\alpha_2}\right|^2 \\
        &\leq \sum_{i=1}^N \sum_{m=n+1}^\infty  \frac{(2T^2)^m}{m!}\\
        & \leq \frac{N \cdot e^{2T^2} (2T^2)^{n+1}}{(n+1)!}.
    \end{align*}
\end{proof}

\paragraph{Kernel Computation}
The previous result suggests that it is often enough to compute truncated moments; however, we can also compute the inner product between the untruncated moment map exactly.

\begin{proposition}
\label{prop:moment_kernel}
    Given two persistence diagrams $D , D' \in \cD^+_\fin(\widetilde{\Omega}_T)$, defined by
    \[
        D = \{z_i = (b_i, p_i)\}_{i=1}^N, \quad \quad D' = \{z_i = (b'_i, p'_i)\}_{i=1}^{N'},
    \]  
    the \emph{persistence moment kernel} can be computed as
    \begin{equation}
        \kappa(X, Y) \coloneqq \langle \wP(D), \wP(D')\rangle_{\cR} = \sum_{i=1}^{N_X} \sum_{j=1}^{N_Y} \exp{\langle z_i, z_j'\rangle} - \exp{b_{i}\cdot b'_{j}} + 1.
    \end{equation}
\end{proposition}
\begin{proof}
By definition of the moment map, and the fact that persistence diagrams are sums of Dirac measures, 
\begin{align*}
    \kappa(X,Y) & = \sum_{i=1}^{N_X} \sum_{j=1}^{N_Y} \langle P(z_i), P(z'_j) \rangle_\cR.
\end{align*}
Expanding the definition of the polynomial map, we find
\begin{align*}
    \langle P(z_i), P(z'_j) \rangle_\cR &= 1 + \sum_{m=1}^\infty \sum_{\substack{\alpha_1+\alpha_2 =m \\ \alpha \geq 0, \alpha \geq 1}} {m \choose \alpha} \frac{(b_i b'_j)^{\alpha_1} (p_i p'_j)^{\alpha_2}}{m!} \\
    & = \exp{\langle z_i, z_j'\rangle} - \exp{b_{i}\cdot b'_{j}} + 1
\end{align*}
\end{proof}

\paragraph{Persistence Moments with Signatures}
The continuity and injectivity of $\widetilde{P}_{\cM}$ allows us to treat static persistence diagrams as elements of the Hilbert space $\cR$. Then by~\Cref{thm:univ_char_general}, we have a universal and characteristic feature map for paths of persistence diagrams, viewed as elements of $\widetilde{C}^{1-var}(\cR)$.

\begin{corollary}[to~\Cref{thm:univ_char_general}]
    Let $\Lambda: T\ps{\cR} \rightarrow T\ps{\cR}$ be a tensor normalization. The normalized signature $\Lambda \circ S: \widetilde{C}^{1-var}(\cR) \rightarrow T\ps{\cR}$ is injective, universal with respect to bounded continuous functions $C_b(\widetilde{C}^{1-var}(\cR))$ with respect to the strict topology, and characteristic with respect to the space of finite Borel measures on $\widetilde{C}^{1-var}(\cR)$.
\end{corollary}

Furthermore, the composition of the truncated path signature with the moment map is Lipschitz continuous for paths with a fixed bounded length.

\begin{theorem}
    Suppose $L > 0$ and suppose
    \begin{align*}
        C^{1-var}_L(V) \coloneqq \{ \gamma \in C^{1-var}(V) \, : \, |\gamma|_{1-var} \leq L\}.
    \end{align*}
    Then, the composition $S_{\leq M} \circ \wP : C^{1-var}_L(\cM^+_{\fin}(\widetilde{\Omega}_T)) \rightarrow T\ps{\cR}$ is Lipschitz continuous.
\end{theorem}
\begin{proof}
    By~\Cref{thm:moment_map_lipschitz}, $\widetilde{P}_{\cM}$ is Lipschitz continuous, and let $C$ be its Lipschitz constant. Then, given two paths $\gamma_1, \gamma_2 \in C^{1-var}(\cM_{fin}(\widetilde{\Omega}_T))$, we have
    \begin{align*}
        |\widetilde{P}(\gamma_1 - \gamma_2)|_{1-var} & = \sup_{\Pi}\sum_{i=1}^n \|\widetilde{P}(\gamma_1(t_i) - \gamma_2(t_i)) - \widetilde{P}(\gamma_1(t_{i-1}) - \gamma_2(t_{i-1}))\| \\
        & \leq \sup_{\Pi}\sum_{i=1}^n C \|(\gamma_1(t_i) - \gamma_2(t_i)) - (\gamma_1(t_{i-1}) - \gamma_2(t_{i-1}))\| \\
        & \leq C |\gamma_1 - \gamma_2|_{1-var}.
    \end{align*}
    Then, combining this with the stability of signatures in~\Cref{thm:signature_stability}, we have
    \begin{align*}
        \|S_{\leq M} \circ \widetilde{P}(\gamma_1) - S_{\leq M} \circ \widetilde{P}(\gamma_2)\| & \leq \frac{C \beta}{L} \exp\left(\left(\frac{L}{\sqrt{2}\beta}\right)^2\right) |\gamma_1 - \gamma_2|_{1-var}.
    \end{align*}
\end{proof}

\subsection{The Discrete Path Signature Kernel}
While the path signature is naturally defined for continuous paths, we often work with discrete time series in data science applications. There is a discrete approximation of the continuous signature for finite dimensional discrete paths which allows for more efficient computation, originally introduced in~\cite{kiraly_kernels_2019}. In this subsection, we recall main concepts of this approximation. \medskip

In this subsection, we let $V$ denote a Hilbert space with orthonormal basis $\{e_\alpha\}_{\alpha \in A}$ for some index set $A$. For $L \in \N$, we let $[L] \coloneqq \{1, \ldots, L\}$. A \emph{discrete path} is a map $\hat{\gamma}:[L+1] \rightarrow V$, and its \emph{discrete derivative} $\hat{\gamma}': [L] \rightarrow V$ is defined to be
\begin{align*}
    \hat{\gamma}'(t) = \hat\gamma(t+1) - \hat\gamma(t)
\end{align*}
for all $t \in [L]$. We let $\Seq(V)$ denote the space of discrete paths of arbitrary finite length on $V$. Furthermore, for $\alpha \in A$, we denote $\gamma_\alpha: [L+1] \rightarrow \R$ to be the $\alpha$-component of the path.

\begin{definition}
\label{def:discrete_ps}
    Let $\hat{\gamma}:[L+1] \rightarrow V$. Let $I = (i_1, \ldots, i_m) \in A^m$ be a multi-index. We define the discrete $m$-simplex with length $L$ to be
    \begin{align*}
        \hat{\Delta}^m_L = \{(t_1, \ldots, t_m) \in [L]^m \quad : \quad 0 \leq t_1 < t_2 < \ldots < t_m \leq L\}.
    \end{align*}
    The discrete path signature of $\hat{\gamma}$ with respect to $I$ is defined to be
    \begin{align*}
        \hat{S}^I(\hat{\gamma}) \coloneqq \sum_{(t_1, \ldots, t_m) \in \hat{\Delta}^m_L} \hat{\gamma}_{i_1}'(t_1)\ldots \hat{\gamma}_{i_m}'(t_m).
    \end{align*}
\end{definition}

The discrete signature is a map $\hat{S}: \Seq(V) \rightarrow T\ps{V}$, or in truncated form as $\hat{S}_{\leq M}: \Seq(V) \rightarrow T^{(M)}(V)$. We will be using the truncated variant of the signature in data analysis applications due to its computability.  The truncated signature feature map then determines a kernel $\hat{K}_{\leq M} : \Seq(V) \times \Seq(V) \rightarrow T^{(M)}(V)$ defined for $\hat{\alpha}, \hat{\beta} \in \Seq(V)$ and $M \geq 1$ by
\begin{align*}
    \hat{K}_{\leq M}(\hat{\alpha}, \hat{\beta}) = \langle \hat{S}_{\leq M}(\hat{\alpha}), \hat{S}_{\leq M}(\beta)\rangle.
\end{align*}
Efficient algorithms to compute the truncated discrete signature kernel along with discretization error results relating the continuous and discrete signatures are provided in~\cite{kiraly_kernels_2019,chevyrev_signature_2022}.

\subsection{Signature Kernels for Paths of Persistence Diagrams}
\label{ssec:signaturekernels_pdpaths}
Combining the previous results in this section, we obtain a feature map for discrete paths of persistence diagrams into the Hilbert space $T^{(M)}(\cR)$
\begin{align*}
    \hat{S}_{\leq M} \circ \widetilde{P} :\Seq(\cM^+_{\fin}(\widetilde{\Omega}_T)) \rightarrow T^{(M)}(\cR). 
\end{align*}
This defines a kernel for sequences of bounded persistence diagrams, which can be computed by using the truncated form of the moment map $\widetilde{P}$ to obtain explicit features in $\cR_n$. Then, we use Algorithm 3 in~\cite{kiraly_kernels_2019} to compute the truncated discrete signature kernel. 

While we have primarily discussed the moment map as the feature map for persistence diagrams, the signature method can be used together with any feature map such as persistence landscapes~\cite{bubenik_statistical_2015-1}, persistence images~\cite{adams_persistence_2017}, or tropical coordinates~\cite{kalisnik_tropical_2019}. Although we do not pursue it here, we can also lift kernels for static persistence diagrams, such as the sliced-wasserstein kernel~\cite{carriere_sliced_2017} and the persistence scale space kernel~\cite{reininghaus_stable_2015, kwitt_statistical_2015}, to kernels for dynamic persistence diagrams, as discussed in~\cite{kiraly_kernels_2019,chevyrev_signature_2022}. \medskip

\section{Application: Parameter Estimation in a Swarm Model}
\label{sec:applications}
In this section, we study the paths of persistence diagrams which arise in models of collective motion. We consider a swarm model and use kernel support vector regression to estimate model parameters purely by using the evolution of the \emph{shape} of the swarm over time. In particular, all of our methods only use unidentified position data from the swarms: we do not have information regarding how individual particles move over time. This restriction disqualifies the use of classical order parameters which make use of velocity information to classify collective behavior.

We study the 3D D'Orsogna model~\cite{dorsogna_self-propelled_2006} for collective motion. \Cref{fig:swarm} depicts a single swarm simulation at various time steps, and demonstrates how the global shape of the swarm evolves over time. The exact organization of the swarm over time will depend on the $(C, \ell)$ parameters, which we have chosen to be $(0.6, 0.3)$ in the figure. The stable structures of these swarms are studied and organized into a phase diagram in~\cite{nguyen_thermal_2012}.

\begin{figure}
\centering
		\includegraphics[width=0.8\textwidth]{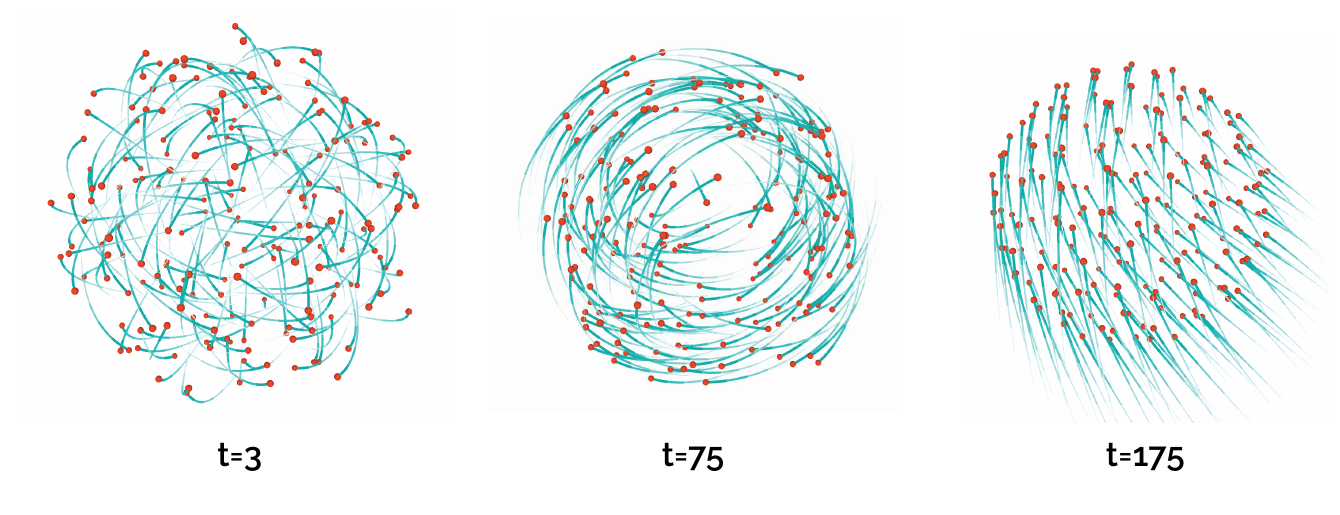}
    		\caption{A single swarm simulation shown at time points $t=3$, $t=75$, and $t=175$. This is simulated using the 3D D'Orsogna model~\cite{dorsogna_self-propelled_2006} with $\Nagent = 200$ agents. The model parameters are $m=1$, $\alpha = 1.0$, $\beta = 0.5$, $C=0.6$, and $\ell = 0.3$.}
    		\label{fig:swarm}
\end{figure}
Previous studies~\cite{bhaskar_analyzing_2019, xian_capturing_2020} have successfully applied persistent homology to classify swarms based on the 2D D'Orsogna and 2D Vicsek models, using a feature map called CROCKER plots. Both of these studies use a collection of simulations generated using a fixed finite set of model parameters, and use clustering methods to classify the simulations into clusters based on the fixed set of parameters. In contrast, our experiments use the 3D D'Orsogna model, and our model parameters are chosen uniformly at random within a specified region in parameter space. Rather than using classification algorithms which only work for a finite number of model parameters, we use regression to perform parameter estimation.

We study the generalizability and stability of our methods by considering situations with missing data in two dimensions. First, we consider agent-wise subsampling, in which a fixed number of agents are randomly subsampled at each time step. Second, we consider time subsampling, in which a fixed number of time steps are subsampled for each simulation trial. Furthermore, we perform experiments with heterogeneous train/test splits, in which the training is performed on the full simulation and the test set consists of simulations with missing data. We find that the path signature (combined with some vectorization of static persistence diagrams) outperforms the CROCKER plots in a large majority of experiments. Furthermore, we also find that using persistence moments is competitive with other standard vectorizations, despite being a much lower dimensional vectorization.

\subsection{Model Description and Data}
The D'Orsogna model~\cite{dorsogna_self-propelled_2006} describes the collective motion of $\Nagent$ interacting agents with positions $\bx_i \in \R^d$, and velocities $\bv_i \in \R^d$. Previous work~\cite{topaz_topological_2015, bhaskar_analyzing_2019} has applied persistent homology techniques to study the two-dimensional ($d=2$) model~\cite{bhaskar_analyzing_2019}. Here we choose to focus on the three-dimensional ($d=3$) case. The agents are governed by the following coupled differential equations
\begin{align*}
    \frac{d\bx_i}{dt} & = \bv_i, \quad \quad
    m\frac{d\bv_i}{dt} = \left(\alpha - \beta|\bv_i|^2\right)\bv_i - \nabla_i U(\bx_i), \\
    U(\bx_i) & = \sum_{j \neq i}^N \left(C_r e^{-\br_{i,j}/\ell_r} - C_a e^{-\br_{i,j}/\ell_a}\right),
\end{align*}
where $\br_{i,j} = \|\bx_i - \bx_j\|$ and $\nabla_i$ refers to the gradient with respect to $\bx_i$. Each agent has mass $m$, is self-propelled with propulsion strength $\alpha$, and experiences a velocity-dependent drag with coefficient $\beta$. Furthermore, each particle $\bx_i$ experiences a potential $U(\bx_i)$ derived from pairwise interactions with all of the other agents. Each pair has a \emph{repulsive} component with strength $C_r >0$ and characteristic length scale $\ell_r>0$, and an \emph{attractive} component with strength $C_a > 0$ and characteristic length scale $\ell_a > 0$. \medskip

For our simulations, we use $\Nagent=200$ interacting agents. Furthermore, we fix $m=1$, $\alpha = 1.0$, and $\beta = 0.5$ which corresponds to the parameters chosen in~\cite{nguyen_thermal_2012}. After nondimensionalizing the equation, we are left with two free parameters: the interaction strength $C = C_r / C_a$ and the characteristic length $\ell = \ell_r / \ell_a$.

We perform $\Nsimulation =500$ instances of this simulation, where each instance is run for $t \in [0, 400]$, and is discretized using $T=200$ uniformly spaced time points. The ODEs are solved using the \texttt{DifferentialEquations} package in Julia. The initial positions are uniformly sampled from the unit cube $[0,1]^3$, each component of the initial velocities is sampled from a Gaussian with mean $1$ and variance $1$, following~\cite{nguyen_thermal_2012}, and $C$ and $\ell$ are chosen uniformly at random from $[0.1, 2]$. We reject unbounded phenotypes, defined to be a simulation in which any particle moves 40 units in any direction before $t = 200$, since they can be easily identified with a simple scale statistic. This restricts the distribution of $(C, \ell)$, and the distribution used in our simulations is shown in~\Cref{fig:CL}. We consider missing data in two different ways.

\begin{figure}
\centering
		\includegraphics[width=0.25\textwidth]{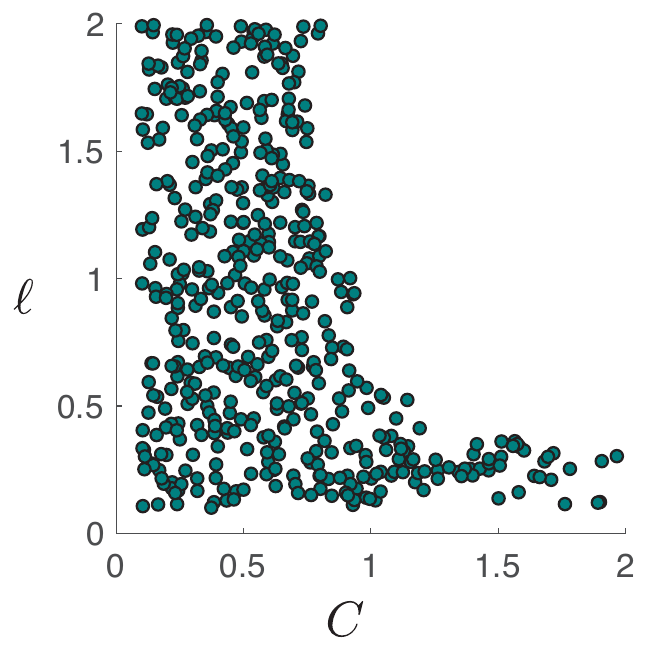}
    		\caption{Scatter plot of generated parameter pairs $(C, \ell)$ that resulted in bounded swarm simulations.}
    		\label{fig:CL}
\end{figure}

\begin{enumerate}
    \item \textbf{(Agent Subsample)} Subsample $N=50$ agents uniformly at random at each timestep
    \item \textbf{(Temporal Subsample)} Subsample $T=50$ or $20$ time points for each simulation. The temporal subsampling is performed in two ways:
    \begin{enumerate}
        \item \textbf{(Random Temporal Subsample)} The time points are subsampled uniformly at random.
        \item \textbf{(Initial Temporal Subsample)} The first $T$ time point are subsampled. This is used to model the case where only the initial dynamics of the swarm are observed, limiting reliance on steady-state dynamics for parameter estimation.
    \end{enumerate}
\end{enumerate}

\subsection{Feature Maps}

We compute persistent homology with the Vietoris-Rips filtration up to dimension $d=2$ for all simulations and all time steps in order to obtain three paths of persistence diagrams (at dimensions $d=0, 1, 2$) for each simulation. Persistent homology is computed using the \texttt{Eirene} software package~\cite{henselman_matroid_2017} written in Julia. We compute the following features for each path of persistence diagrams; path signatures are truncated at level $m=3$. Landscapes and images are computed using the \texttt{giotto-tda} package~\cite{tauzin_giotto-tda_2021}. We refer the reader to the  references for further details on the other vectorizations.

\begin{enumerate}
    \item \textbf{Persistence Moments + Signature.} Moments are computed for each homology dimension\footnote{For dimension $0$, we only compute pure lifetime moments, since all birth times are $0$.}, truncated at degree $2$, and then concatenated into a single vector. In particular, there are 2 features for $H_0$, 3 features for $H_1$, and 3 features for $H_2$
    \item \textbf{Persistence Paths + Signature.} Persistence paths~\cite{chevyrev_persistence_2020} views static persistence diagrams of various dimensions as a \emph{Betti curve} $\beta = (\beta_0, \beta_1, \beta_2)$: the Betti numbers of the persistence module over the scale parameter. The signature of this Betti curve is computed and used as feature for a static diagram. Here, there are 3 features at level $1$ and 9 features at level $2$.
    \item \textbf{Persistence Landscapes + Signature.} Persistence landscapes~\cite{bubenik_statistical_2015-1} view persistence diagrams as a sequence of curves which quantify the containment of points on the persistence diagrams within certain intervals of the scale parameter. We use $5$ landscapes per dimension, discretized using $10$ time points on a log-scale due to large differences in scale between simulations. 
    \item \textbf{Persistence Images + Signature.} Persistence images~\cite{adams_persistence_2017} view persistence diagrams as a function $PI: \Omega_T \to \R$ defined as a mixture of Gaussians centered at each point of the persistence diagram. The region $\Omega_T$ is then discretized to obtain a vectorized feature. We use a $10\times 10$ grid on a log-scale due to large differences in scale between simulations, and Gaussians with variance $\sigma=0.3$. 
    \item \textbf{CROCKER Plots.} Crocker plots, introduced in~\cite{topaz_topological_2015}, treat the discretized Betti curve $\beta = (\beta_0, \beta_1, \beta_2)$ as a feature vector. Following~\cite{bhaskar_analyzing_2019}, we discretize the Betti curve on a log scale from $\epsilon = 10^{-4}$ to $\epsilon = 1$ with $N_\epsilon = 100$ points. Furthermore, we uniformly subsample the time coordinate to obtain $N_t = 20$ points. Collapsing the discretized Betti curves into a single vector, each path of persistence diagrams is an element in $\R^{3\times 100 \times 20} = \R^{6000}$. The kernel is the standard inner product on $\R^{6000}$. 
\end{enumerate}

For each of the static vectorization methods, we chose parameters such that the resulting representation was low-dimensional, yet was still able to capture relevant information about the data. A key advantage of persistence moments and paths is the lack of discretization, resulting in significantly fewer features, as seen in~\Cref{tab:dimension_features}.

\begin{table}[h]
    \centering
\begin{tabular}{ccccc} \toprule
    & Moments & Paths & Landscapes & Images \\ \midrule
    Dimension of Features & 8 & 12 & 150 & 300 \\ \bottomrule
\end{tabular}
\caption{Dimensions of the static vectorization features, including persistence diagrams of dimension $0, 1$, and $2$.}
\label{tab:dimension_features}
\end{table}

\subsection{Regression and Statistical Methods}
We use the support vector regression (SVR) algorithm~\cite{drucker_support_1997, scholkopf_learning_2002} to take advantage of signature kernels. We use the SVR algorithm from the Julia implementation of the \texttt{scikit-learn} library. 
We begin by enumerating the $\Nsimulation = 500$ individual simulations and randomly select a 400 / 100 split for the training and test data respectively. With this choice, we perform a \emph{trial} of our analysis, which consists of three steps:
\begin{enumerate}
    \item \textbf{Hyperparameter selection.} The SVR algorithm uses the hyperparameters $(\lambda, \epsilon)$ and we tune select these by grid search with 4-fold cross-validation exclusively using the training data. The grid points of $\lambda$ are logarithmically spaced between $10^{-3}$ and $10^1$, and those of $\epsilon$ are logarithmically spaced between $10^{-4}$ and $10^0$. We select 5 points for each parameter for a total of $25$ parameter pairs.
    
    For the signature methods, we use a sliding window embedding, a common preprocessing step for signatures~\cite{chevyrev_primer_2016, fermanian_embedding_2021}. In particular, also optimize over $k = 0, 1, 2$ lags. 
    \item \textbf{Training.} We select the parameter pair $(\lambda, \epsilon, k)$ with lowest mean square error (MSE). We then train the SVR using these hyperparameters on the entire training set.
    \item \textbf{Prediction.} Using the trained regressor, we predict the $(C, \ell)$ values of the data in the test set and compute the MSE for each parameter individually.
\end{enumerate}

\begin{figure}[t]
    \centering
            \includegraphics[width=0.9\textwidth]{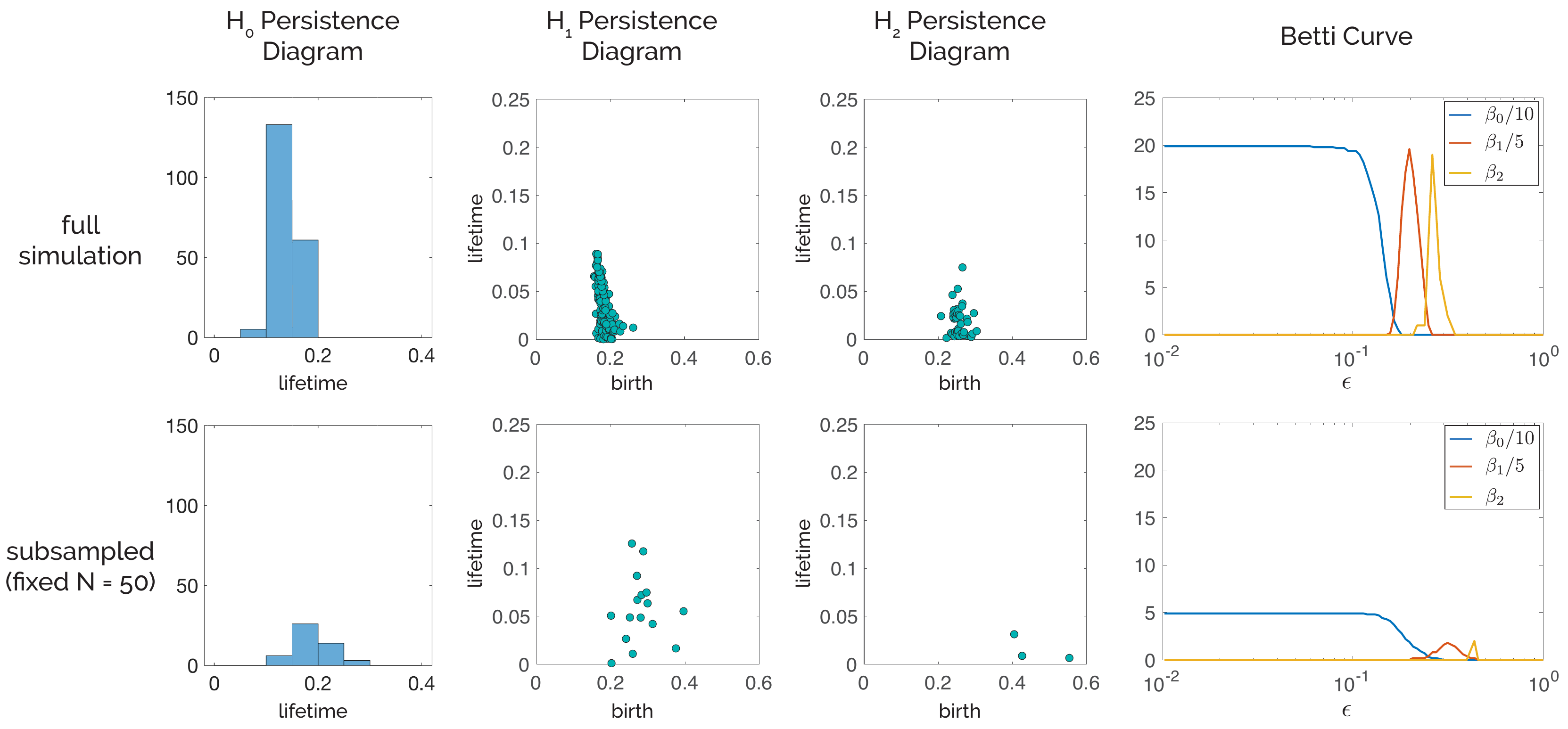}
                \caption{Comparison of persistence diagrams and Betti curves between a point cloud from the full simulation (top row) and the subsampled simulation with fixed $N=50$ (bottom row). The $H_0$ persistence diagram is shown as a histogram of lifetimes, since all birth times are at $\epsilon = 0$. In the Betti curve, $\beta_0$ is scaled by a factor of $1/10$ and $\beta_1$ is scaled by a factor of $1/5$ for visualization purposes.}
                \label{fig:subsample_comparison}
\end{figure}

\subsection{Heterogeneous Train/Test Experiments}

In order to further study the generalizability of our method, we consider experiments with heterogeneous train/test data. In these experiments, the training set consists of full simulations without any subsampling, while the test set consists of agent-wise and/or temporally subsampled data. For such experiments, a direct application of our methods would not perform well due to known scaling properties of persistence diagrams. A representative example of this subsampling behavior is shown in~\Cref{fig:subsample_comparison}, demonstrating two representative phenomena that occur due to subsampling:
\begin{enumerate}
    \item the persistence diagrams are much sparser due to fewer points in the point cloud, and
    \item there is a shift in the birth coordinate.
\end{enumerate}

We can understand these two phenomena through limit theorems for point processes in a unit $d$-cube, as shown in~\cite{divol_choice_2019,divol_understanding_2021}. In particular, given a probability measure $\PP$ supported on the unit $d$-cube, and point clouds $\X_n = (X_1, \ldots, X_n)$ with $X_n \in [0,1]^d$ independently sampled from this measure. Further, let $\mu_n = \frac{1}{n}\mathrm{Dgm}(n^{1/d}\X_n) \in \cM^+(\widetilde{\Omega})$ be the scaled persistence measure of $\X_n$. Then~\cite{divol_understanding_2021} shows that there exists a measure $\mu$ such that $\mu_n \rightarrow \mu$ in the $p$-partial Wasserstein metric.

While our point clouds are not subsampled from a unit $d$-cube, this limit theorem motivates a feature normalization to amend the subsampling phenomena. In particular, using the fact that our swarm models evolve in $\R^3$, we scale our point clouds by $N^{1/3}$ and the resulting persistence diagrams by $N^{-1}$, where $N$ is the number of subsampled agents. All heterogeneous experiments use persistence diagrams which are normalized in this way.

\begin{figure}[t]
    \centering
            \includegraphics[width=0.98\textwidth]{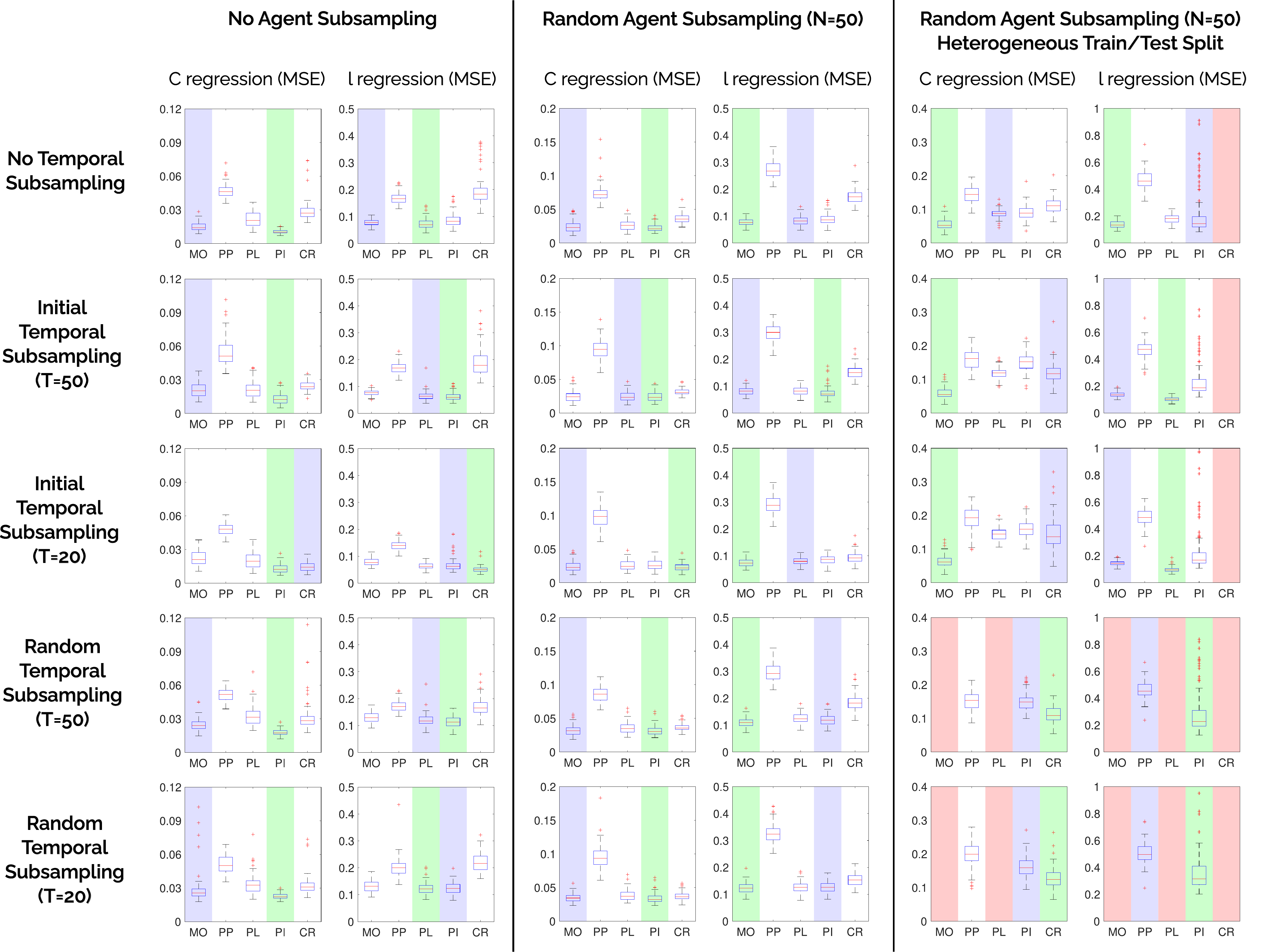}
                \caption{Box plots of the MSE statistics of the $C$ and $\ell$ regression, computed using $\Ntrial = 100$ trials for each experiment. The feature map with the lowest (resp. second lowest) median MSE is highlighted in green (resp. blue); experiments where the MSE is greater than $0.4$ ($C$ regression) or $0.8$ ($\ell$ regression) are omitted and highlighted in red. Note that the $y$-axis is only fixed for each column. MO: persistence moments, PP: persistence paths, PL: persistence landscapes, PI: persistence images, CR: CROCKER plots.}
                \label{fig:stats}
\end{figure}
\subsection{Results and Discussion}

The results of our experiment are shown as boxplots in~\Cref{fig:stats}. We highlight two main observations.

\begin{enumerate}
    \item \textbf{Signature methods outperform CROCKER plots.} In a large majority of the experiments, most of the signature methods outperform the CROCKER plots, demonstrating the efficacy of the general path signature methodology. Persistence images paired with the signature consistently perform well throughout all experiments.
    \item \textbf{Persistence moments are competitive with classical static vectorizations.} In many of the experiments, persistence moments achieve second-best performance, with only a small difference in error compared to the best. Note that the dimension of persistence moments is an order of magnitude lower than both landscapes and images. 
    \item \textbf{Persistence moments perform well on heterogeneous experiments with different time intervals.} In the heterogeneous experiments where train and test split contain different time intervals (first three plots in third column in~\Cref{fig:stats}), moments consistently perform well, and outperform other methods in the $C$ regression. 
\end{enumerate}

We find that the combination of the moment map with the path signature is able to effectively capture the dynamic topological structure of these swarm models in order to accurately estimate the model parameters. An interesting direction for future work is to understand the critical properties of feature maps for persistence diagrams which lead to more robust path features when combined with the path signature. 

\begin{figure}[t]
    \centering
            \includegraphics[width=0.62\textwidth]{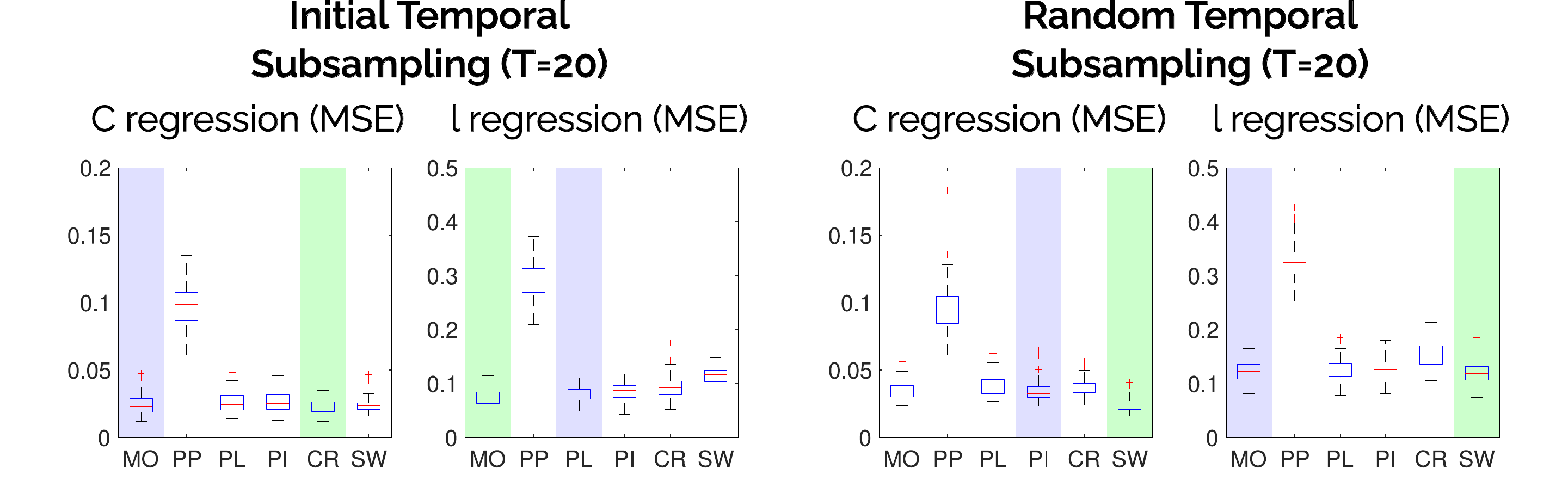}
                \caption{Box plots of the MSE statistics of the $C$ and $\ell$ regression, computed using $\Ntrial = 100$ trials for each experiment. Both experiments have random agent subsampling of $N=50$. The feature map with the lowest (resp. second lowest) median MSE is highlighted in green (resp. blue). MO: persistence moments, PP: persistence paths, PL: persistence landscapes, PI: persistence images, CR: CROCKER plots, SW: sliced Wasserstein kernel (for measures).}
                \label{fig:sw_stats}
\end{figure}

\subsection{Comparison with Kernel Mean Embedding Methods} \label{ssec:kme}
We also compare our dynamic methods with kernel mean embedding (KME) methods. In particular, suppose $D :[L] \to \cD^+_{\fin}$ is a discrete path of persistence diagrams. We forget all of the temporal information in the path by viewing this as a probability measure $\mu_D \in \cP(\cD_{\fin}^+)$ defined by $\mu_D \coloneqq \frac{1}{L} \sum_{t=1}^L \delta_{D(t)}$.
We consider the kernel mean embedding of this measure with respect to the sliced Wasserstein (SW) kernel~\cite{carriere_sliced_2017}. In particular, if $\Phi_{SW}: \cD_{\fin}^+ \rightarrow \cH_{SW}$ is the feature map corresponding to the SW kernel, the mean embedding of $\mu_D$ is
\[
    \E[\Phi_{SW}(\mu_D)] = \frac{1}{L} \sum_{t=1}^L \Phi_{SW}(D(t)).
\]
We use the formalism of \emph{support measure machines}~\cite{muandet_learning_2012}, and define a kernel on probability measures $\mu_D, \mu_{D'} \in \cP(\cD_{\fin}^+)$ by
\[
    \kappa_{\cP}(\mu_D, \mu_D') = \langle \E[\Phi_{SW}(\mu_D)], \E[\Phi_{SW}(\mu_{D'})]\rangle.
\]
For two paths of diagrams $D:[L] \to \cD^+_{\fin}$ and $D': [L'] \to \cD^+_{\fin}$, this is computed exactly as
\[
    \kappa_{\cP}(\mu_D, \mu_{D'}) = \frac{1}{L\cdot L'} \sum_{i=1}^L \sum_{j=1}^{L'} \kappa_{SW}(D(i), D'(j)),
\]
where $\kappa_{SW}$ is the SW kernel. We can then use this kernel in our above experiments. However, due to a longer computation time, we only perform the experiments with agent subsampling of $N=50$ and temporal subsampling of $T=20$; see~\Cref{apx:kme_time} for further details on time complexity.

Surprisingly, these KME methods are competitive with dynamic methods on both experiments. Previous articles which have studied collective motion using persistent homology have compared dynamic topological methods with classical order parameters~\cite{bhaskar_analyzing_2019,xian_capturing_2020}; however, to the authors' knowledge, experiments using KME methods for persistence diagrams derived from swarm data have not been previously reported. The initial experiments in this subsection suggest that further study is warranted, though this is beyond the scope of this article.

\section{Conclusion}
\label{sec:conclusion}

We have presented a theoretical and computational framework for the study of paths of persistence diagrams. By building a connection to Lipschitz-free spaces, we integrate several generalizations of the space of persistence diagrams into a single Banach space. This allowed us to define a Banach space of bounded variation paths of persistence diagrams, and thus to define their path signature. The Banach space structure provides a setting to study probabilistic and statistical properties of both static and time-varying persistence diagrams. Furthermore, the path signature is universal and characteristic, providing a robust tool kit for characterizing probability measures on paths of persistence diagrams. 

In addition, we introduced a new feature map for static persistence diagrams, motivated by the perspective of diagrams as measures, and showed that it is both injective and stable. Coupling this with the discrete path signature, we defined an approximation to the intrinsic continuous signature. This approximation is computable, stable, and valued in a Hilbert space, enabling the use of kernel methods for data analysis. We demonstrated the efficacy of our feature map with a parameter estimation problem for a 3D model of collective motion. 

While we specifically considered the moment map, the signature methods we presented is general: the signature can be used in conjunction with any feature map or kernel for persistence diagrams. Moreover, the framework laid out in this paper suggests several possibilities for future work, and we highlight some promising directions here. 
\begin{itemize}
    \item \textbf{Persistence vineyards.} Persistence vineyards~\cite{cohen-steiner_vines_2006} are paths of persistence diagrams with finer information; in particular it retains information about how individual homology classes evolve over time. Can we
    use methods such as adapted Wasserstein metrics~\cite{backhoff-veraguas_all_2020} and higher rank signatures~\cite{bonnier_adapted_2020} to characterize vineyards?

    \item \textbf{Rough paths.} The theory of \emph{rough paths}~\cite{lyons_differential_2007, lyons_system_2007} provides a deterministic integration theory for highly irregular paths valued in Banach spaces, and has led to several developments in stochastic differential equations~\cite{friz_multidimensional_2010, friz_course_2020}. Therefore, using the Lipschitz-free space as a Banach space for persistence diagrams, we may define and study \emph{rough paths of persistence diagrams}. This opens up the possibility of studying the persistent homology of data driven by stochastic differential equations.

    \item \textbf{Lipschitz-free spaces.} Although we have discussed several properties of Lipschitz-free spaces which are directly applicable to the study of persistence diagrams, are there other aspects of this connection which we can exploit? One concrete problem is the an explicit description of the Lipschitz-free space of the quotient metric space of a metric pair $(X, d, A)$. Can we adapt the representations for known Lipschitz-free spaces such as $\cF(\R^N)$~\cite{cuth_isometric_2017} to develop a new representation of persistence diagrams?
    
\end{itemize}

\appendix

\section{Notation}

\begin{table}[h]
    \centering
\begin{tabular}{ll} \toprule
    \multicolumn{2}{c}{Spaces of Persistence Diagrams}\\ \midrule
    $(Z,d,A)$ & arbitrary metric pair \\
    $(\Omega,d, \Delta)$ & metric pair for unbounded persistence diagram (birth, death)\\
    $(\Omega_T, d, \Delta_T)$ & metric pair for bounded persistence diagrams (birth, persistence)\\
    $(\widetilde{Z},\wid, *)$ & quotient (pointed) metric space of $(Z,d,A)$ \\
    $\cD^+_{\fin}(Z,A)$ & finite persistence diagrams on $(Z,A)$ \\
    $\cM^+_{\fin}(Z,A)$ & finite persistence measures (Radon measures) on $(Z,A)$ \\
    $\cM^+_{\fin, p}(Z,A)$ & finite $p$-persistence measures on $(Z,A)$ \\
    $\cF(\widetilde{Z})$ & Lipschitz-free space of $\widetilde{Z}$\\
    $W_p^{\partial}[d]$ & partial $p$-Wasserstein distance with respect to $d$\\
    $\cR$ & formal (commutative) power series of two variables with finite norm\\
    $P$ & polynomial map \\
    $\wP$ & persistence moments map \\  \midrule

    \multicolumn{2}{c}{Paths and Signatures}\\ \midrule
    $C^{1-var}(V)$ & bounded variation paths in $V$ \\
    $T\ps{V}$ & tensor algebra (power series of tensors with finite norm)\\
    $T^{(M)}(V)$ & tensor algebra truncated at degree $M$ \\
    $S$ & path signature \\
    $S_{\leq M}$ & path signature truncated at degree $M$ \\
    $\Lambda$ & tensor normalization\\
    $\hat{S}$ & discrete path signature\\
    $\hat{K}$ & discrete signature kernel\\ \bottomrule
\end{tabular}
\caption{Table of notation.}
\label{tab:notation}
\end{table}

\section{The Strict Topology}
\label{apxsec:strict}
The main tool to prove universality of feature maps is the Stone-Weierstrass theorem, which usually only holds for locally compact spaces. However, spaces such as path spaces are usually not locally compact. As was first noted in~\cite{chevyrev_signature_2022} for applications to signatures, when $\cG = C_b(\cX)$ is equipped with the \emph{strict topology}~\cite{giles_generalization_1971}, there exists a Stone-Weierstrass result, and $\cG'$ includes the space of probability measures on $\cX$.\medskip

Let $\cX$ be a topological space. A function $\psi: \cX \rightarrow \R$ \emph{vanishes at infinity} if for all $\epsilon >0$, there exists a compact set $K \subset \cX$ such that $\sup_{x \in \cX/K} |\psi(x)| < \epsilon$. Let $B_0(\cX,\R)$ denote the set of functions that vanishes at infinity. The stricrt topology on the space of continuous bounded functions $C_b(\cX,\R)$ is the topology generated by the seminorms
\[
    p_\psi(f) = \sup_{x \in \cX}|f(x)\psi(x)|, \psi \in B_0(\cX,\R).
\]

\begin{theorem}[\cite{giles_generalization_1971}]
\label{thm:strict_topology}
Let $\cX$ be a metrizable topological space.
\begin{enumerate}
    \item The strict topology on $C_b(\cX)$ is weaker than the uniform topology and stronger than the topology of uniform convergence on compact sets.
    \item If $\cG_0$ is a subalgebra of $C_b(\cX)$ such that for all $x, y\in \cX$, there exists some $f \in \cG_0$ such that $f(x) \neq f(y)$ ($\cG_0$ separates points), and for all $x \in \cX$, there exists some $f \in \cG_0$ such that $f(x) \neq 0$, then $\cG_0$ is dense in $C_b(\cX)$ under the strict topology.
    \item The topological dual of $C_b(\cX)$ equipped with the strict topology is the space of finite regular Borel measures on $\cX$. 
\end{enumerate}
\end{theorem}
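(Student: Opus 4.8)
The plan is to treat the three assertions in turn, as each is a classical functional-analytic fact; the only genuinely delicate point is the Stone--Weierstrass statement in part (2). For part (1), both comparisons are elementary. Every $\psi \in C_0(\cX)$ is bounded, so $p_\psi(f) \le \|\psi\|_\infty\,\|f\|_\infty$, which shows each generating seminorm of the strict topology is dominated by the uniform norm; hence the strict topology is weaker than the uniform one. For the other comparison, fix a compact $K \subseteq \cX$ with its compact-open seminorm $q_K(f) = \sup_{x\in K}|f(x)|$. Since $\cX$ is metrizable, a Urysohn-type construction yields a compactly supported $\psi_K \in C_0(\cX)$ with $0 \le \psi_K \le 1$ and $\psi_K \equiv 1$ on $K$, so that $q_K \le p_{\psi_K}$; thus uniform convergence on compacta is weaker than the strict topology.

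For part (2), I would argue directly with the seminorms. Given $f \in C_b(\cX)$, $\psi \in C_0(\cX)$ (without loss of generality $\psi \ge 0$), and $\epsilon > 0$, set $M = \|f\|_\infty$ and observe that for each $\delta > 0$ the superlevel set $K = \{x : \psi(x) \ge \delta\}$ is compact, being a closed subset of a compact set. The idea is to split $p_\psi(f-g) = \sup_x |(f-g)(x)\psi(x)|$ over $K$ and its complement: off $K$ we have $\psi < \delta$, so this part is controlled by $\delta(2M+1)$ as soon as the approximant $g$ satisfies $\|g\|_\infty \le M+1$; on $K$ it is at most $\|\psi\|_\infty \sup_K|f-g|$, which classical Stone--Weierstrass on $C(K)$ makes small since $\cG_0|_K$ separates points and vanishes nowhere on $K$. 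Choosing $\delta$ and the approximation quality appropriately then gives $p_\psi(f-g) < \epsilon$. \textbf{The main obstacle} is producing an approximant whose \emph{global} norm is controlled (the bound $\|g\|_\infty \le M+1$), since an element of $\cG_0$ that is close to $f$ on $K$ may blow up at infinity. I would resolve this using the lattice structure of the uniform closure $A = \overline{\cG_0}$: because $|a| = \lim_n p_n(a)$ for polynomials $p_n$ with $p_n(0)=0$ approximating $t \mapsto |t|$, the closed subalgebra $A$ is also closed under $|\cdot|$, and more generally contains $h \circ a$ for every continuous $h$ with $h(0)=0$. Applying this to the truncation $h(t) = \mathrm{med}(t, -c, c)$ with $c = M+1$ turns any crude approximant into one of norm $\le c$ that is unchanged on $K$, after which a final uniform approximation returns us to $\cG_0$.

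For part (3), the forward inclusion proceeds by reducing a strictly continuous functional to a single seminorm. Continuity of $L$ at the origin gives finitely many $\psi_1,\dots,\psi_n \in C_0(\cX)$ and $C > 0$ with $|L(f)| \le C\max_i p_{\psi_i}(f)$; setting $\psi = \max_i|\psi_i| \in C_0(\cX)$ yields $|L(f)| \le C\,\|f\psi\|_\infty$. Since $f \mapsto f\psi$ maps $C_b(\cX)$ into $C_0(\cX)$, I would define $\tilde L(f\psi) := L(f)$ on the image subspace, which is well defined and bounded by the displayed estimate, extend it by Hahn--Banach to all of $C_0(\cX)$, and invoke the Riesz representation theorem (whose dual is exactly the finite regular Borel measures) to obtain $\nu$ with $L(f) = \int f\psi\,d\nu = \int f\,d\mu$ for the finite regular measure $\mu = \psi\,d\nu$. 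For the converse, given a finite regular Borel measure $\mu$, I would use inner regularity/tightness to choose compacts $K_1 \subseteq K_2 \subseteq \cdots$ with $|\mu|(\cX \setminus K_n) \to 0$ rapidly and then build a strictly positive $\psi \in C_0(\cX)$ decaying slowly enough that $\int \psi^{-1}\,d|\mu| < \infty$; this gives $\left|\int f\,d\mu\right| \le \big(\int \psi^{-1}\,d|\mu|\big)\,p_\psi(f)$, so $L_\mu$ is strictly continuous. This tightness construction of $\psi$ is the secondary technical point, but it is routine compared with the norm-control step in part (2).
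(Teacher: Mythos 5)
First, a point of reference: the paper does not prove this theorem at all; it quotes it from Giles~\cite{giles_generalization_1971}, so your attempt must be measured against the known arguments for that result.

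There is a genuine gap, and it is located in a single misreading that propagates through parts (1) and (3): you treat $C_0(\cX)$ as the \emph{continuous} functions vanishing at infinity and then argue as if $\cX$ were locally compact. The paper's definition (following Giles) requires no continuity of $\psi$ — it says ``the set of functions that vanish at infinity'' — and this is the entire point of Giles's generalization. The two readings diverge fatally off the locally compact setting: if $\psi$ is continuous, vanishes at infinity, and $\psi(x_0) \neq 0$, then $\{|\psi| \geq |\psi(x_0)|/2\}$ is a compact neighborhood of $x_0$; hence on a nowhere locally compact metric space — e.g.\ an infinite-dimensional Banach space, or the path space $\widetilde{C}^{1-var}(\cF(\widetilde{\Omega}))$ to which the paper actually applies this theorem — the \emph{only} continuous function vanishing at infinity is $0$. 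Under your reading the strict topology is then indiscrete, and parts (1) and (3) are simply false, so no argument can close the gap without changing the reading. This is exactly why your Urysohn step in part (1) cannot work: a compactly supported continuous function equal to $1$ on $K$ exists only when $K$ has a compact neighborhood. Under the intended reading, part (1) is instead trivial: take $\psi = \indd_K$, so that $p_\psi = q_K$ on the nose.

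Part (3) is where the real mathematical content lies, and your argument fails there even after the reading is corrected. In the forward direction you invoke the Riesz representation theorem for $C_0(\cX)^*$, which requires $\cX$ locally compact Hausdorff; moreover, with discontinuous $\psi$ the map $f \mapsto f\psi$ lands in a space of bounded \emph{discontinuous} functions, whose sup-norm dual consists of finitely additive set functions, so Hahn--Banach gives no route to a countably additive tight Borel measure — establishing countable additivity and tightness of the representing measure is precisely the hard part of Giles's theorem and needs a different argument. In the converse direction, a strictly positive $\psi$ vanishing at infinity (continuous or not) forces $\cX = \bigcup_n \{\psi \geq 1/n\}$ to be $\sigma$-compact, so your construction is impossible in general; the repair is to use tightness of $\mu$ (this is what ``regular'' means here) to choose compacts $K_n$ with $|\mu|(\cX \setminus K_n) \leq 4^{-n}$ and take the discontinuous layered function $\psi = \indd_{K_1} + \sum_{n} 2^{-n} \indd_{K_{n+1} \setminus K_n}$, which yields $\bigl|\int f \, d\mu\bigr| \leq (|\mu|(\cX) + 1)\, p_\psi(f)$. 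By contrast, your part (2) is essentially sound: the Stone--Weierstrass reduction to a compact set is the right move, and your truncation trick (composing with $h(t) = \mathrm{med}(t,-c,c)$ inside the uniform closure) correctly resolves the one real difficulty, namely controlling the global sup norm of the approximant. It survives the corrected reading after one small repair: for discontinuous $\psi$ the superlevel set $\{\psi \geq \delta\}$ need not be closed or compact, so one splits instead over the compact set $K$ furnished directly by the definition of vanishing at infinity.
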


\section{Quotient Metric Spaces and Measures}
\label{apx:quotient_metric}

We show that the quotient map for metric spaces induces an isometry for Radon measures. 

\begin{definition}
    Let $(X,d)$ be a metric space and suppose $\mu, \nu \in M^+(X)$ such that $\mu(X) = \nu(X)$. Let $\pi_1, \pi_2 : X \times X \rightarrow X$ be the projections to the first and second factor. The set of \emph{couplings} between $\mu$ and $\nu$ is defined to be
    \begin{align*}
        \Cpl(\mu, \nu) \coloneqq \left\{ \sigma \in \cM^+_{\fin}(X \times X) \, : \, (\pi_1)_*\sigma = \mu, (\pi_2)_*\sigma = \nu \right\}.
    \end{align*}
    The \emph{$p$-Wasserstein distance} between $\mu$ and $\nu$ is
    \begin{align*}
        W_p[d](\mu, \nu) \coloneqq \inf_{\sigma \in \Cpl(\mu, \nu)} \left( \int_{X \times X} d(x,y)^p d\sigma(x,y) \right)^{1/p}.
    \end{align*}
\end{definition}
The following proposition in~\cite{divol_understanding_2021} shows that we can compute partial Wasserstein distances using the ordinary Wasserstein distance for measures. 

\begin{proposition}[\cite{divol_understanding_2021}]
\label{prop:partial_ordinary_wass}
    Let $(X, d, A)$ be a metric pair and let $(\widetilde{X}, \tilde{d}, *)$ be its quotient. Let $\mu, \nu \in \cM^+_{fin, p}(X)$ and $r_\mu = \mu(X-A), r_\nu =  \nu(X-A)$, which is well defined since it does not depend on the equivalence class. Let $ r \geq r_\mu + r_\nu$. Define the modifications $\tilde{\mu}, \tilde{\nu} \in M^+(\widetilde{X})$ by
    \begin{align*}
        \tilde{\mu} &= \mu + (r- r_\mu) \delta_{*}, \quad \quad \tilde{\nu} = \nu + (r-r_\nu) \delta_{*},
    \end{align*}
    where $\delta_{*}$ is the Dirac measure at the basepoint $*$. Then, $W^\partial_p[d](\mu, \nu) = W_p[d](\tilde{\mu}, \tilde{\nu})$.
\end{proposition}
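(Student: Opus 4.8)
The plan is to prove the equality by two matching inequalities, each realized by an explicit transfer between admissible couplings on $X \times X$ and ordinary couplings on $\widetilde{X} \times \widetilde{X}$. Throughout, let $q : X \to \widetilde{X}$ denote the quotient map collapsing $A$ to the basepoint $*$, and record the key pointwise bound $\tilde{d}(q(x), q(y)) \le d(x,y)$ for all $x, y \in X$, which is immediate from $\tilde{d} = \min\{d(x,y),\, d(x,A) + d(y,A)\}$ together with the convention $d(x,A) = \tilde{d}(x,*)$.

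For the inequality $W_p[d](\tilde{\mu},\tilde{\nu}) \le W^\partial_p[d](\mu,\nu)$, I take an admissible coupling $\sigma \in \ACpl(\mu,\nu)$ of finite cost and first normalize its total mass to $r$. Restricting $\sigma$ to $(X-A)\times(X-A)$, to $(X-A)\times A$, and to $A\times(X-A)$, the marginal constraints force each piece to have finite mass; discarding the part of $\sigma$ supported on $A\times A$ is cost-nonincreasing and leaves the marginals unchanged modulo $\overline{M}^+(A)$, and the remaining mass is then at most $r_\mu + r_\nu \le r$. I restore the total to exactly $r$ by adding a multiple of $\delta_{(a_0,a_0)}$ for a fixed $a_0 \in A$, which costs nothing. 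Pushing the result forward under $q \times q$ yields a measure on $\widetilde{X} \times \widetilde{X}$ whose real marginals are preserved and whose basepoint masses are exactly $r - r_\mu$ and $r - r_\nu$, hence an element of $\Cpl(\tilde{\mu},\tilde{\nu})$; by the pointwise bound its $\tilde{d}$-cost is at most the $d$-cost of $\sigma$, and taking the infimum over $\sigma$ gives the claim.

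For the reverse inequality $W^\partial_p[d](\mu,\nu) \le W_p[d](\tilde{\mu},\tilde{\nu})$, I take an ordinary coupling $\bar{\sigma} \in \Cpl(\tilde{\mu},\tilde{\nu})$ and $\epsilon > 0$ and lift it to an admissible coupling. Fixing a measurable near-optimal selection $s : X - A \to A$ with $d(x, s(x)) \le d(x,A) + \epsilon$, I decompose $\bar{\sigma}$ by whether each coordinate is the basepoint, and on $(X-A)\times(X-A)$ split further along the measurable set where $\tilde{d}(x,y) = d(x,y)$ versus where $\tilde{d}(x,y) = d(x,A)+d(y,A)$. On the first set I keep the mass at $(x,y)$; on the second I split it into two legs, sending $x$ to $s(x)$ and $s(y)$ to $y$; and mass of the form $(x,*)$ or $(*,y)$ is routed to $(x,s(x))$ or $(s(y),y)$. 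Each lift is a pushforward of a restriction of $\bar{\sigma}$ under an explicit measurable map, so the total $\sigma$ is a well-defined positive measure whose real marginals are exactly $\mu|_{X-A}$ and $\nu|_{X-A}$ — the auxiliary mass deposited on $A$ is invisible modulo $\overline{M}^+(A)$ — whence $\sigma \in \ACpl(\mu,\nu)$. Using $a^p + b^p \le (a+b)^p$ for $a,b \ge 0$ and $p \ge 1$ to bound the cost of each split leg by the corresponding $\tilde{d}$-cost, I obtain $\Cost_p[d](\sigma) \le \Cost_p[\tilde d](\bar{\sigma}) + O(\epsilon)$; taking the infimum over $\bar{\sigma}$ and letting $\epsilon \to 0$ finishes the direction.

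The main obstacle is not conceptual but measure-theoretic: since the measures need not be atomic, the "split and reroute through $A$" step must be performed as pushforwards under measurable maps rather than pointwise on atoms, which is exactly what the measurable selection $s : X - A \to A$ provides (it exists because $A$ is closed in a Polish space, so $x \mapsto d(x,A)$ is continuous and an $\epsilon$-selection can be chosen measurably). The one genuinely quantitative point is reconciling the $L^p$ costs: the quotient metric bundles the two boundary legs into the single quantity $d(x,A)+d(y,A)$, whereas an admissible coupling pays $d(x,A)^p + d(y,A)^p$ separately, and the inequality $a^p + b^p \le (a+b)^p$ shows the lift never costs more while the projection direction supplies the matching lower bound. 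The hypothesis $r \ge r_\mu + r_\nu$ is precisely what guarantees enough basepoint mass on each side to route every unit of mass through the boundary whenever that is the cheaper option.
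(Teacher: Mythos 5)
Your proof is correct. One caveat on the comparison: the paper does not actually prove this proposition---it is imported verbatim from~\cite{divol_understanding_2021}---so the relevant benchmark is the proof in that reference, and your two-inequality scheme (push an admissible coupling forward under $q \times q$ after discarding the $A \times A$ part and padding with zero-cost mass at the basepoint; conversely, lift an ordinary coupling by rerouting all boundary-bound mass through a measurable $\epsilon$-selection $s : X - A \to A$) is essentially that argument, generalized from the half-plane/diagonal setting of~\cite{divol_understanding_2021}, where the nearest-point projection onto $\partial\Omega$ exists and is continuous, to arbitrary metric pairs, where your measurable $\epsilon$-selection is the correct substitute. One bookkeeping point deserves a line in a polished write-up: you state the first inequality for $W_p[d](\tilde{\mu},\tilde{\nu})$ but bound the projected coupling's cost against the quotient metric $\tilde{d}$, whereas the proposition's $W_p[d]$ is literally taken with the extended function $d$ on $\widetilde{X} \times \widetilde{X}$, which dominates $\tilde{d}$ and differs from it exactly on pairs of off-boundary points for which rerouting through $*$ is cheaper. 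This is harmless: the pointwise bound you invoke also holds for the extended $d$ (on $(X-A) \times A$ it reads $d(x,A) \leq d(x,a)$, with equality on $(X-A)\times(X-A)$), so your projection argument bounds the $d$-cost as well; combining that with your lifting inequality and $\tilde{d} \leq d$ yields $W_p^\partial[d](\mu,\nu) = W_p[\tilde{d}](\tilde{\mu},\tilde{\nu}) = W_p[d](\tilde{\mu},\tilde{\nu})$, so both readings of the statement are covered, and the hypothesis $r \geq r_\mu + r_\nu$ enters exactly where you say it does---guaranteeing enough basepoint mass to absorb every rerouted leg.
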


\begin{corollary}
    Let $(X,d,A)$ be a metric pair and $(\widetilde{X}, \tilde{d}, *)$ be its quotient metric space. Let $q: \cM^+_{\fin, p}(X, A) \rightarrow \cM^+_{\fin,p}(\widetilde{X})$ be the bijective map induced by the quotient map. Suppose $\mu, \nu \in \cM^+_{\fin, p}(X, A)$, then $W^\partial_p[d](\mu, \nu) = W^\partial_p[\tilde{d}](q(\mu), q(\nu))$.
\end{corollary}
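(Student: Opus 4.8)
The plan is to reduce the statement to a comparison of two ordinary (balanced) Wasserstein distances on the single space $\widetilde{X}$, and then to show these coincide by rerouting transport through the basepoint. First I would apply \Cref{prop:partial_ordinary_wass} to the metric pair $(X,d,A)$: fixing the balancing constant $r = r_\mu + r_\nu$ and setting $\tilde\mu = \mu + (r - r_\mu)\delta_*$ and $\tilde\nu = \nu + (r-r_\nu)\delta_*$ (pushed forward to $\widetilde{X}$), this gives $W^\partial_p[d](\mu,\nu) = W_p[d](\tilde\mu, \tilde\nu)$, where $d$ denotes the abused cost function on $\widetilde{X} \times \widetilde{X}$. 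Next I would observe that the quotient metric space of the pointed space $(\widetilde{X}, \tilde d, \{*\})$ is $(\widetilde{X}, \tilde d, *)$ itself: the triangle inequality $\tilde d(x,y) \le \tilde d(x,*) + \tilde d(y,*)$ is exactly the statement that collapsing $\{*\}$ leaves $\tilde d$ unchanged. Hence a second application of \Cref{prop:partial_ordinary_wass}, now to $(\widetilde{X}, \tilde d, \{*\})$ and the measures $q(\mu), q(\nu)$, yields $W^\partial_p[\tilde d](q(\mu), q(\nu)) = W_p[\tilde d](\widetilde{q(\mu)}, \widetilde{q(\nu)})$.

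The two families of balanced measures agree: since $q(\mu)$ is represented by $\mu$ restricted to $X - A$, carrying mass $r_\mu$ and no mass at $*$, choosing the same constant $r$ gives $\widetilde{q(\mu)} = q(\mu) + (r - r_\mu)\delta_* = \tilde\mu$, and likewise $\widetilde{q(\nu)} = \tilde\nu$. Thus the corollary reduces to the single identity
\[
    W_p[d](\tilde\mu, \tilde\nu) = W_p[\tilde d](\tilde\mu, \tilde\nu).
\]
Since $\tilde d \le d$ pointwise on $\widetilde{X} \times \widetilde{X}$, every coupling has smaller $\tilde d$-cost than $d$-cost, which gives $W_p[\tilde d] \le W_p[d]$ immediately.

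The reverse inequality is the crux. Given any coupling $\sigma$ of $\tilde\mu$ and $\tilde\nu$, I would build a coupling $\sigma'$ with $\Cost_p[d](\sigma') \le \Cost_p[\tilde d](\sigma)$ by rerouting the \emph{bad} transport, namely the mass $\sigma|_B$ supported on $B = \{(x,y) : x,y \neq *, \ d(x,y) > d(x,*) + d(y,*)\}$, through the basepoint: each unit of mass carried from $x$ to $y$ across a bad pair is instead carried from $x$ to $*$ and from $*$ to $y$. On $B$ the cost satisfies $\tilde d(x,y) = d(x,*) + d(y,*)$, so the superadditivity $(s+t)^p \ge s^p + t^p$ for $p \ge 1$ guarantees the rerouted $d$-cost does not exceed the original $\tilde d$-cost; off $B$ and on all edges incident to $*$ we have $d = \tilde d$, so those contributions are unchanged. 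The point to check is feasibility: rerouting the mass $M = \sigma(B)$ raises both the source load and the sink load at $*$ by $M$, which must be absorbed by reducing the basepoint self-loop $\sigma(\{(*,*)\})$ by $M$. Bookkeeping the five components of $\sigma$ shows the self-loop mass equals $b - u$, where $b = r - r_\nu$ is the $*$-mass of $\tilde\nu$ and $u$ is the real-to-basepoint mass; because $r \ge r_\mu + r_\nu$ forces $b \ge r_\mu$, the self-loop mass is at least $r_\mu - u \ge M$, so the reduction is legitimate. Taking the infimum over $\sigma$ then gives $W_p[d] \le W_p[\tilde d]$, and hence equality.

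I expect the main obstacle to be making this rerouting rigorous at the level of measures rather than finitely supported couplings. The decomposition of $\sigma$ according to $B$, the basepoint fibers, and the self-loop, together with the pushforwards $(x,y) \mapsto (x,*)$ and $(x,y) \mapsto (*,y)$ of $\sigma|_B$, should be carried out by a disintegration and gluing argument, and one must verify that every marginal identity survives this surgery exactly as in the mass bookkeeping above.
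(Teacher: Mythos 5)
Your proposal is correct, and its skeleton is the same as the paper's: the paper likewise applies \Cref{prop:partial_ordinary_wass} twice --- once to $(X,d,A)$ and once, implicitly, to the pointed pair $(\widetilde{X},\tilde{d},\{*\})$, whose quotient is itself --- and observes that with a common choice of $r$ the two families of modifications are literally the same measures $\tilde{\mu},\tilde{\nu}$. The difference is what happens after that. The paper's entire proof is the chain $W^\partial_p[d](\mu,\nu)=W_p[d](\tilde{\mu},\tilde{\nu})=W^\partial_p[\tilde{d}](q(\mu),q(\nu))$, justified only by the remark that $\tilde{\mu},\tilde{\nu}$ are also modifications of $q(\mu),q(\nu)$. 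Taken at face value (with the paper's abuse-of-notation cost $d$ on $\widetilde{X}\times\widetilde{X}$), the two applications of the proposition yield balanced Wasserstein distances with \emph{different} ground costs, $d$ and $\tilde{d}$, which disagree exactly on pairs of off-diagonal points for which the detour through $A$ is shorter; the paper never reconciles them. Your ``crux'' identity $W_p[d](\tilde{\mu},\tilde{\nu})=W_p[\tilde{d}](\tilde{\mu},\tilde{\nu})$ is precisely the missing bridge, and your proof of it is sound: $\tilde{d}\le d$ gives one inequality, and your rerouting surgery gives the other, with the feasibility bookkeeping correct --- the real-to-real mass of any coupling is $r_\mu-u$, the bad set $B$ lies inside the real-to-real block, and the self-loop mass $b-u\ge r_\mu-u\ge\sigma(B)$ can indeed absorb the rerouted mass. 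What your route buys is a self-contained argument that does not depend on how one reads the citation of~\cite{divol_understanding_2021} (if that proposition is instead quoted with the quotient metric as ground cost, the bridge identity is hidden inside its proof rather than absent); what the paper's terseness buys is brevity at the price of leaving the cost identification unstated. One simplification to your final paragraph: no disintegration or gluing is needed. The set $B$ is open, hence Borel, and the rerouted coupling can be written explicitly as $\sigma'=\sigma-\sigma|_B+F_*(\sigma|_B)+G_*(\sigma|_B)-\sigma(B)\,\delta_{(*,*)}$ where $F(x,y)=(x,*)$ and $G(x,y)=(*,y)$; both marginal identities follow by applying $\pi_{1*}$ and $\pi_{2*}$ term by term, and positivity of $\sigma'$ is exactly your self-loop bound.
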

\begin{proof}
    Let $\tilde{\mu}, \tilde{\nu} \in M^+(\widetilde{X})$ be the modifications of $\mu$ and $\nu$ from~\Cref{prop:partial_ordinary_wass}. Then, $W^\partial_p[d](\mu, \nu) = W_p[d](\tilde{\mu}, \tilde{\nu}) = W^\partial_p[\tilde{d}](q(\mu), q(\nu))$, since $\tilde{\mu}, \tilde{\nu}$ are  modifications of $q(\mu)$ and $q(\nu)$.
\end{proof}

\section{Comparison of KME Time Complexity}\label{apx:kme_time}
In this appendix, we briefly consider the time complexity of the kernel mean embedding method considered in~\Cref{ssec:kme}. The Gram matrix computation for the KME kernel (with an underlying sliced Wasserstein kernel) scales poorly. In particular, the computation of this Gram matrix requires $O(N_{simulation}^2 T^2)$ evaluations of $\kappa_{SW}$, and each evaluation of $\kappa_{SW}$ has $O(n\log(n))$ complexity, where $n$ is the maximum cardinality of the two persistence diagrams~\cite{carriere_sliced_2017}. On the other hand, computing moments require only $O(N_{simulation} Tn)$ basic operations, and then the signature kernel requires $O(N_{simulation}^2 T^2)$ basic operations~\cite{kiraly_kernels_2019}. The result is significantly longer computation time of the KME kernel as shown in the timing experiments shown on the following tables. The following computations were done with a single thread on a 2021 Macbook Pro with M1 Pro processor.

\begin{table}[h]
    \centering
    \begin{tabular}{ccccc} 
        & $T=5$ & $T=10$ & $T=15$ & $T=20$ \\\toprule
        $N=10$ & 0.13 & 0.25 & 0.53 & 0.94 \\
        $N=20$ & 0.24 & 0.93 & 2.05 & 3.65\\
        $N=30$ & 0.53 & 2.04 & 4.77 & 8.05\\
        $N=40$ & 0.96 & 3.65 & 7.97 & 14.08\\
    \end{tabular}
\caption{Time in seconds for computing the sliced wasserstein kernel (5 slices) with $N$ simulations and $T$ time points each.}

\end{table}

\begin{table}[h]
    \centering
    \begin{tabular}{ccccc} 
        & $T=5$ & $T=10$ & $T=15$ & $T=20$ \\ \toprule
        $N=10$ & 0.0025 & 0.0021 & 0.0030 & 0.0072\\
        $N=20$ & 0.0055 & 0.0069 & 0.0140 & 0.0150\\
        $N=30$ & 0.0110 & 0.0130 & 0.0159 & 0.0350\\
        $N=40$ & 0.0266 & 0.0211 & 0.0244 & 0.0476\\
    \end{tabular}
\caption{Time in seconds for computing the signature kernel (truncation level 3) with persistence moments (truncation level 2) with $N$ simulations and $T$ time points each.}

\end{table}

\section*{Acknowledgments}
CG was funded by NSF-1854683 and AFOSR FA9550-21-1-0266. 
DL was funded by the United States Office of the Assistant Secretary of Defence Research and Engineering, ONR N00014-16-1-2010, NSERC PGS-D3 scholarship, NCCR-Synapsy Phase-3 SNSF grant number 51NF40-185897, and the Hong Kong Innovation and Technology Commission (InnoHK Project CIMDA).

\bibliographystyle{plain}
\bibliography{pdpaths}

\end{document}